\title{Partially scattered linearized polynomials and rank metric codes}
\author{Giovanni Longobardi - Corrado Zanella}
\date{}
\newcommand{\aaa}{\mathbf a}
\newcommand{\cC}{{\mathcal C}}
\newcommand{\cN}{{\mathcal N}}
\newcommand{\cF}{{\mathcal F}}
\newcommand{\cD}{{\mathcal D}}
\newcommand{\cS}{{\mathcal S}}
\newcommand{\F}{{\mathbb F}}
\newcommand{\V}{{\mathbb V}}
\newcommand{\Fq}{{\F_q}}
\newcommand{\Fqn}{\F_{q^n}}
\newcommand{\Fqt}{{\F_{q^t}}}
\newcommand{\la}{\langle}
\newcommand{\ra}{\rangle}
\renewcommand{\mod}{\hbox{{\rm mod}\,}}
\newcommand{\Rt}{\operatorname{R-}q^t\operatorname-}
\newcommand{\Lt}{\operatorname{L-}q^t\operatorname-}
\newtheorem{theorem}{Theorem}[section]
\newtheorem{proposition}[theorem]{Proposition}
\DeclareMathOperator{\tr}{Tr}
\DeclareMathOperator{\PG}{{PG}}
\DeclareMathOperator{\GL}{{GL}}
\DeclareMathOperator{\Gal}{Gal}
\DeclareMathOperator{\rk}{rk}
\DeclareMathOperator{\N}{N}
\DeclareMathOperator{\p}{p}
\DeclareMathOperator{\End}{End}
\DeclareMathOperator{\im}{im}
\theoremstyle{definition}
\newtheorem{definition}[theorem]{Definition}
\newtheorem{remark}[theorem]{Remark}
\begin{document}

\relax

\maketitle

\begin{abstract}
A linearized polynomial $f(x)\in\Fqn[x]$ is called scattered if  for any $y,z\in\Fqn$, the condition
$zf(y)-yf(z)=0$ implies that $y$ and $z$ are $\Fq$-linearly dependent.
In this paper two generalizations of the notion of a scattered linearized polynomial
are provided and investigated.
Let $t$ be a nontrivial positive divisor of $n$.
By weakening the property defining a scattered linearized polynomial, 
$\Lt$partially scattered and $\Rt$partially scattered linearized polynomials are introduced
in such a way that the scattered linearized polynomials are precisely those which are both
$\Lt$ and $\Rt$partially scattered.
Also, connections between partially scattered polynomials, linear sets and rank metric codes are exhibited.
\end{abstract}

\bigskip
{\it AMS subject classifications:} 51E20, 05B25, 51E22

\bigskip
{\it Keywords:} linearized polynomial, linear set, subgeometry, finite field, 
finite projective space, rank metric code, MRD-code

\section{Introduction} \label{s:uno}

Let $U$ be an $\Fq$-linear subspace of an $r$-dimensional vector space $V$ over $\Fqn$.
The following set of points in the projective space $\PG(V,\Fqn)\cong\PG(r-1,q^n)$:
\[L_U=\{\la v\ra_{\Fqn}\colon v\in U\setminus\{0\}\},\] consisting of all points
represented by nonzero vectors in $U$, is called an 
\emph{$\Fq$-linear set} of \emph{rank} $m=\dim_\Fq U$.
The rank of a linear set is not always uniquely defined.
For example, any $\Fq$-linear set of rank greater than $(r-1)n$ in $\PG(r-1,q^n)$ trivially
coincides with $\PG(r-1,q^n)$.
The \emph{weight} of a point $P=\la v\ra_{\Fqn}$ of $\PG(r-1,q^n)$ with respect to $L_U$
is $w(P)=\dim_{\Fq}(\la v\ra_{\Fqn}\cap U)$.

A  \emph{canonical $\Fq$-subgeometry} in $\PG(m-1,q^n)$ is
an $\Fq$-linear set of rank $m$, say ${\Sigma}$, 
such that $\la {\Sigma}\ra=\PG(m-1,q^n)$
(\footnote{In this paper, $\la A\ra_F$ denotes the $F$-linear span of the set of vectors 
$A$, $F$ a field,
whereas $\la B\ra$ (without subscript) denotes the projective span of the set of points $B$
in a projective space.}).
Lunardon and Polverino in \cite{LuPo2004} (see also \cite{LuPoPo2002}) showed that every linear
set is a projection of a subgeometry.
More precisely, let $\Sigma$, $\Gamma$ and $\Lambda$ be 
a canonical $\Fq$-subgeometry,
an $(m-r-1)$-subspace and an $(r-1)$-subspace
of $\Sigma^*=\PG(m-1,q^n)$, respectively, such that $\Sigma\cap\Gamma=\emptyset=\Lambda\cap\Gamma$.
Let $\p_{\Gamma,\Lambda}$ be the
\textit{projection from $\Gamma$ into $\Lambda$} defined by
$\p_{\Gamma,\Lambda}(P)=\langle \Gamma,P\rangle \cap \Lambda$ 
for each point $P \in \Sigma^*\setminus\Gamma$.
We call $\Gamma$ and $\Lambda$ the \emph{vertex} (or \emph{center}) and the \emph{axis} of the projection, respectively.
Then $L=\p_{\Gamma,\Lambda}(\Sigma)$ is an $\F_q$-linear set of $\Lambda$ of 
rank $m$ and $\langle L \rangle=\Lambda$.
Conversely, any $\Fq$-linear set spanning $\Lambda$ can be obtained in this way. This includes the canonical subgeometries as projections from the empty vertex.

Let  $V$ be an $r$-dimensional vector space over $\Fqn$.
The projective space
$\PG (V,\Fq)\cong\PG(rn-1,q)$ is called the projective space obtained from
$\PG (V,\Fqn)\cong\PG(r-1,q^n)$ by \emph{field reduction}.
The point set of $\PG(V,\F_{q^n})$
induces a partition $\cS$ of $\PG (V,\Fq)$ into $(n-1)$-subspaces; such $\cS$ is called
the \emph{normal $(n-1)$-spread of $\PG (V,\Fq)$ induced by  $\PG (V,\Fqn)$} 
by field reduction.

Any $m$-dimensional $\Fq$-subspace $U$ of $V$ determines an
$(m-1)$-dimensional subspace, say $\tilde U$, of $\PG (V,\Fq)$.
Such $\tilde U$ is called \emph{scattered with respect to $\cS$} if any element of 
$\cS$
meets $\tilde U$ in at most one point. 
In this case, the related linear set $L_U$ is called a \emph{scattered linear set}.
Equivalently, $L_U$ is scattered when all its points have weight one.
The subspace $\tilde U$ is called \emph{maximum scattered}, and $L_U$ is called 
\emph{maximum scattered linear set} if any $m$-dimensional subspace of the projective
space $\PG(V,\Fq)$ is not scattered.
If $rn$ is even, the dimension $m-1$ of a maximum scattered subspace with respect to $\cS$
satisfies $m=rn/2$ \cite{BL2000}.
The known results concerning the non-even values of $rn$ are surveyed in \cite{OlgaFer}.

An $\Fq$-\emph{linearized polynomial}, or \emph{$q$-polynomial}, over $\Fqn$ is a polynomial of the form
$f(x) =\sum_{i=0}^{k}a_i x^{q^i}\in\Fqn[x]$, $k \in \mathbb{N}$. 
If $a_k \neq 0$, the integer $k$ is called the \emph{$q$-degree} of $f(x)$.
It is well known that any linearized polynomial defines an endomorphism of $\F_{q^n}$, 
when $\F_{q^n}$ is 
regarded as an $\F_q$-vector space and, vice versa, each element of $\End_{\F_q}(\F_{q^n})$	can be represented 
as a unique linearized polynomial over $\F_{q^n}$ of $q$-degree less than $n$; see \cite{lidl}.
For a $q$-polynomial $f(x) = \sum_{i=0}^{n-1}a_i x^{q^i}$ 
over $\Fqn$, let $D_f$ denote the associated \textit{Dickson matrix} (or $q$-\textit{circulant matrix} )
\begin{equation}\label{dickson}
D_f=
\begin{pmatrix}

a_0 & a_1 &\ldots & a_{n-1}\\
a^q_{n-1} & a^q_0 & \ldots & a^q_{n-2}\\
\vdots & \vdots & \vdots & \vdots \\
a_1^{q^{n-1}} & a_{2}^{q^{n-1}} & \ldots & a_0^{q^{n-1}}
\end{pmatrix}.
\end{equation}

The rank of the matrix $D_f$ is the rank of the  $\Fq$-linear map $f(x)$, see \cite{WuLiu}.\\
Any $\Fq$-linear set of rank $n$ in the projective  line 
$\PG(1,q^n)$ is projectively equivalent
to $L_f=L_{U_f}$ where $U_f=\{(x,f(x))\colon x\in\Fqn\}$ with $f(x)\in\Fqn[x]$ a $q$-polynomial.
The polynomial $f(x)$ is called a \emph{scattered $q$-polynomial}
if $L_f$ is scattered.
That is, $f(x)$ is scattered if and only if for any $y,z\in\Fqn$, the condition
$zf(y)-yf(z)=0$ implies that $y$ and $z$ are $\Fq$-linearly dependent.

An $\Fq$-linear \emph{rank metric code} (or \emph{RM-code}) is an $\Fq$-subspace $\cC$ of 
$\Fq^{\ell\times n}$, endowed with the metric $d(A,B)=\rk(A-B)$ for any $A,B\in\cC$.
If $|\cC| \geq 2$, the \emph{minimum distance} of $\cC$ is
$$d=\min\{d(A,B)\colon A,B\in\cC,\,A\neq B\}.$$
The \emph{Singleton-like bound} states that if $k=\dim_\Fq\cC$, then
\[
k\le\max\{\ell,n\}(\min\{\ell,n\}-d+1).
\]
If the equality in the Singleton-like bound holds, then $\cC$ is a \emph{maximum rank distance code},
or \emph{MRD-code} with parameters $(\ell,n,q;d)$.\\
The \emph{adjoint code} of an RM-code $\cC \subset \F_q^{\ell \times n}$ is the set 
$\cC^\top=\{C^t\colon C\in\cC\}\subset \F_q^{n\times \ell}$,
where $C^t$ denotes the transpose of the matrix $C$.
Take into account the symmetric bilinear form on $\F_q^{\ell \times n}$ 
$$ \langle M, N \rangle  = \tr(MN^t).$$
The \textit{Delsarte dual code} of $\cC$ is
\begin{equation}\label{delsarte-dual}
\cC^\perp= \{N \in \F_q^{\ell \times n} \,:\,
\langle M, N \rangle =0,  \,\, \forall M \in  \cC\}.
\end{equation}
In \cite{delsarte}, Delsarte proved that if $\cC \subset  \F_q^{\ell \times n}$ is an $\Fq$-linear MRD-code with dimension $k$ and  $d > 1$, then $\cC^\perp$ 
is an MRD-code of dimension $\ell n - k$.\\
Two $\Fq$-linear codes $\cC,\cC'\subset \F_q^{\ell \times n}$ are called \textit{equivalent} if
there exist $P\in\GL(\ell,q)$, $Q \in \GL(n, q)$ and a field
automorphism $\sigma$ of $\Fq$ such that 
$$\cC' = \{PC^\sigma Q \colon C \in \cC\}.$$
Furthermore, $\cC$ and $\cC'$ are \emph{weakly equivalent} if $\cC$ is equivalent to
$\cC'$ or to $(\cC')^\top$.
Finally, the
\textit{left} and \textit{right idealisers} $L(\cC)$ and $R(\cC)$ of an RM-code 
$\cC\subset \F_q^{\ell \times n}$
 are defined as the sets
\[L(\cC) = \{X \in \F_q^{\ell\times\ell} \colon X C  \in \cC \,\,\textnormal{for all} \,\, C \in \cC\},\]
\[R(\cC) = \{Y \in \F_q^{n\times n} \colon  C Y  \in \cC \,\,\textnormal{for all}\,\, C \in \cC\},\]
respectively. These two concepts were introduced in \cite{liebhold_automorphism_2016}, and in \cite{LTZ2} with different names. 
Since the left (resp.\ right) idealisers of two equivalent RM-codes are equivalent,
they turn out to be an useful tool in investigating the equivalence issue among RM-codes. 

\begin{proposition}[\cite{LTZ2}, Propositions 4.1 and 4.2, Theorem 5.4 and Corollary 5.6]\label{MRD-idealisers}  Let $\cC$ be an $\Fq$-linear RM-code of $\F^{\ell \times n}_q$. 
	The following statements hold:
	\begin{enumerate}[(a)]
		\item  $L(\cC^\top)=R(\cC)^\top$ and $R(\cC^\top)=L(\cC)^\top$
		\item $L(\cC^\perp)=L(\cC)^\top$  and $R(\cC^\perp) =R(\cC)^\top$
	\end{enumerate}
	Next assume that $\cC$ is an $\Fq$-linear MRD-code of $\F^{\ell \times n}_q$ with minimum distance $d > 1$.
	If $\ell \leq n$, then $L(\cC)$ is a finite field with $|L(\cC)| \leq  q^\ell$. If $\ell \geq n$, then $R(\cC)$ is
	a finite field with $|R(\cC)| \leq q^n$. In particular, when $\ell = n$ then $L(\cC)$ and $R(\cC)$
	are both finite fields.
\end{proposition}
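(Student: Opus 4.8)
\emph{Proof strategy.}
Parts (a) and (b) are formal, and I would verify them directly from the definitions. For (a), $X\in L(\cC^{\top})$ means $XC^{\top}\in\cC^{\top}$ for every $C\in\cC$, equivalently $(XC^{\top})^{\top}=CX^{\top}\in\cC$ for every $C\in\cC$, equivalently $X^{\top}\in R(\cC)$; this gives $L(\cC^{\top})=R(\cC)^{\top}$, and replacing $\cC$ by $\cC^{\top}$ (and using $(\cC^{\top})^{\top}=\cC$) yields $R(\cC^{\top})=L(\cC)^{\top}$. For (b), the cyclic invariance of the trace gives, for $X\in L(\cC)$, $M\in\cC$ and $N\in\cC^{\perp}$,
\[
\langle M,X^{\top}N\rangle=\tr\bigl(M(X^{\top}N)^{\top}\bigr)=\tr\bigl(MN^{\top}X\bigr)=\tr\bigl((XM)N^{\top}\bigr)=\langle XM,N\rangle=0,
\]
because $XM\in\cC$; hence $X^{\top}N\in\cC^{\perp}$ and $L(\cC)^{\top}\subseteq L(\cC^{\perp})$, and since $\langle\cdot,\cdot\rangle$ is nondegenerate we have $(\cC^{\perp})^{\perp}=\cC$, so applying the same inclusion to $\cC^{\perp}$ gives equality; the statement for $R$ follows by the symmetric computation.

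For the last part, by (a) it is enough to treat the case $\ell\le n$: indeed $R(\cC)=L(\cC^{\top})^{\top}$ with $\cC^{\top}\subseteq\F_q^{n\times\ell}$ again MRD of minimum distance $d>1$ but with $n\le\ell$, so once $L(\cC^{\top})$ is shown to be a field with $|L(\cC^{\top})|\le q^{n}$, transposition (an anti-isomorphism, hence an isomorphism here) carries it onto $R(\cC)$, giving the claim; the case $\ell=n$ then results from both halves. So assume $\ell\le n$. The set $L(\cC)$ is an $\F_q$-subalgebra of $\F_q^{\ell\times\ell}$ containing $I$; I would show that every nonzero $Y\in L(\cC)$ is an invertible matrix. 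Granting this, $Y^{-1}\in\F_q[Y]\subseteq L(\cC)$ by Cayley--Hamilton, so $L(\cC)$ is a finite division ring, hence a finite field $\K$ by Wedderburn's little theorem; and $\F_q^{\ell}$ is then a nonzero module over the field $\K$, so it is free over $\K$, forcing $|\K|^{t}=q^{\ell}$ for some $t\ge1$ and thus $|\K|\le q^{\ell}$.

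The crux --- and the step I expect to demand the most care --- is that a nonzero $Y\in L(\cC)$ has trivial kernel, and here the MRD hypothesis enters through the Singleton-like bound. For $W\le\F_q^{\ell}$ put $\cC_W=\{C\in\cC:\im C\subseteq W\}$; in a suitable basis this is a rank-metric code in $\F_q^{(\dim W)\times n}$ of minimum distance $\ge d$, so $\dim_{\F_q}\cC_W\le n\max\{0,\dim W-d+1\}$. First, the column spaces of the codewords of $\cC$ span $\F_q^{\ell}$: otherwise $\cC$ would lie in some $\F_q^{w\times n}$ with $w<\ell$, still of minimum distance $\ge d$, contradicting $\dim_{\F_q}\cC=n(\ell-d+1)$. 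Consequently, for $Y\in L(\cC)\setminus\{0\}$ the image $Y\cC\subseteq\cC_{\im Y}$ is nonzero, so every nonzero codeword in it has rank between $d$ and $r:=\rk Y$; hence $r\ge d$ and $\dim_{\F_q}(Y\cC)\le n(r-d+1)$. On the other hand the $\F_q$-linear map $C\mapsto YC$ on $\cC$ has kernel $\cC_{\ker Y}$, so $\dim_{\F_q}(Y\cC)=\dim_{\F_q}\cC-\dim_{\F_q}\cC_{\ker Y}\ge n(\ell-d+1)-n\max\{0,(\ell-r)-d+1\}$. Comparing the two estimates gives $\ell-r\le\max\{0,(\ell-r)-d+1\}$, impossible for $d\ge2$ unless $\ell-r=0$; so $\ker Y=\{0\}$. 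The only delicacy is tracking the degenerate ranges (where $\rk Y<d$ or $\dim\ker Y<d$, forcing the relevant subcodes to be trivial) through this comparison; with that in place the argument is complete.
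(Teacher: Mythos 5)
This statement is quoted in the paper from \cite{LTZ2} (Propositions 4.1, 4.2, Theorem 5.4, Corollary 5.6) and no proof is given there, so there is no in-paper argument to compare against; what you have written is a self-contained proof, and it is correct. Parts (a) and (b) are the same routine verifications one finds in \cite{LTZ2}: transposition and the cyclic invariance of $\tr(MN^t)$, together with $(\cC^\perp)^\perp=\cC$, give both inclusions. For the main part, your reduction of the case $\ell\ge n$ to $\ell\le n$ via $R(\cC)=L(\cC^\top)^\top$ is legitimate (transposition preserves rank and dimension, so $\cC^\top$ is again MRD with the same $d$, and an anti-isomorphism restricted to a commutative subalgebra is an isomorphism), and the key step is sound: for nonzero $Y\in L(\cC)$ with $r=\rk Y$, the two estimates $\dim_{\F_q}(Y\cC)\le n\max\{0,r-d+1\}$ (from $Y\cC\subseteq\cC_{\im Y}$ and the Singleton-like bound applied to the subcode supported on $\im Y$) and $\dim_{\F_q}(Y\cC)=\dim_{\F_q}\cC-\dim_{\F_q}\cC_{\ker Y}\ge n(\ell-d+1)-n\max\{0,\ell-r-d+1\}$ do force $\ell-r\le\max\{0,\ell-r-d+1\}$, which for $d\ge2$ gives $r=\ell$; the degenerate ranges are exactly absorbed by the $\max\{0,\cdot\}$ convention (a subcode whose codewords all have rank below $d$ must be trivial, and the spanning of $\F_q^\ell$ by column spaces of codewords guarantees $Y\cC\neq\{0\}$). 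The conclusion via Cayley--Hamilton (so $Y^{-1}\in\F_q[Y]\subseteq L(\cC)$), Wedderburn, and the $\K$-vector-space structure of $\F_q^\ell$ giving $|\K|^t=q^\ell$ is also fine. This dimension-counting route is essentially the same spirit as the original proof in \cite{LTZ2}; its benefit here is simply that it makes the cited result self-contained, at the cost of redoing material the paper deliberately outsources.
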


The reader is referred to \cite{OlgaFer} for  more generalities on the rank metric codes,
as well as for other connections between scattered linear sets and MRD-codes.
See also \cite{Lunsur,Sh}.

In this paper two classes of $q$-polynomials in $\Fqn[x]$, $n$ a non-prime integer, are dealt
with, both containing all scattered $q$-polynomials of $\Fqn[x]$.
Such polynomials are defined in Sections \ref{elle} and \ref{erre}, and called 
$\Lt$partially scattered and $\Rt$partially scattered
$q$-polynomials, with $t\neq1$ a proper divisor of $n$.
The polynomials belonging to the intersection of both classes are precisely the scattered
$q$-polynomials.
In Theorem \ref{rflf}, the relations between the linear sets $L_f$, $\tilde U_f$ and
$R_f=\{\la v\ra_{\Fqt}\colon v\in U,\,v\neq0\}\subseteq\PG(2(n/t)-1,q^t)$ are investigated,
where $f(x)$ is an $\Lt$ or $\Rt$partially scattered linearized polynomial.
Some algebraic conditions for partially scattered polynomials are described, as well as
general techniques of construction and examples.
In Section \ref{s:tre} rank metric codes are defined related to
$\Rt$partially scattered $q$-polynomials.
In Theorem \ref{R-ps_MRDsquare} such codes are proved to be MRD.
Furthermore, their left and right idealisers are investigated.

\section{\texorpdfstring{Partially scattered $q$-polynomials}{Partially scattered q-polynomials}}%
\label{s:due}

Throughout this paper $f(x)$ denotes a $q$-polynomial of the form $\sum_{i=0}^{n-1}a_ix^{q^i}\in\F_{q^n}[x]$,
where $n=tt'$ and $t,t'\in\mathbb N\setminus\{0,1\}$.

\subsection{Definition and geometric meaning}

\begin{definition}\label{elle}
The $q$-polynomial $f(x)$ is \emph{$\Lt$partially scattered} if for any $y,z\in\F_{q^n}^*$,
\begin{equation}\label{e:elle}
\frac{f(y)}y=\frac{f(z)}z\quad\Longrightarrow\quad\frac yz\in\F_{q^t}.
\end{equation}
\end{definition}
\begin{definition}\label{erre}
The $q$-polynomial $f(x)$ is \emph{$\Rt$partially scattered} if for any $y,z\in\F_{q^n}^*$,
\begin{equation}\label{e:erre}
\frac{f(y)}y=\frac{f(z)}z\ \land\ \frac yz\in\F_{q^t}\quad\Longrightarrow\quad\frac yz\in\F_{q}.
\end{equation}
\end{definition}
Clearly, if $f(x)$ is both L- and $\Rt$partially scattered, then $f(x)$ is scattered.

\begin{theorem}\label{rflf}
Let $U_f=\{(y,f(y))\colon y\in\Fqn\}$, $U_f^*=U_f\setminus\{(0,0)\}$, and consider the following sets:
\begin{enumerate}[A.]
\item$\tilde U_f=\{\la v\ra_{\Fq}\colon v\in U^*_f\}$; this is an 
$(n-1)$-dimensional subspace of the projective space $\PG(\Fqn^2,\Fq)\cong\PG(2n-1,q)$ 
associated with $V(\Fqn^2,\Fq)$;
\item $R_f=\{\la v\ra_{\Fqt}\colon v\in U^*_f\}$; this is an $\Fq$-linear set of rank $n$ in
the projective space $\PG(\Fqn^2,\Fqt)\cong\PG(2t'-1,q^t)$;
\item $L_f=\{\la v\ra_{\Fqn}\colon v\in U_f^*\}$; this is an $\Fq$-linear set of rank $n$ in
the projective line $\PG(\Fqn^2,\Fqn)\cong\PG(1,q^n)$.
\end{enumerate}
Then
\begin{enumerate}[(i)]
\item The size $\#R_f$ of $R_f$ equals $\#L_f$ if and only if $f(x)$ is
$\Lt$partially scattered.
\item $R_f$ is a scattered $\Fq$-linear set if and only if $f(x)$ is $\Rt$partially scattered.
In this case, the map $\mu:\la v\ra_{\Fq}\in\tilde U_f\mapsto\la v\ra_{\Fqt}\in R_f$ is bijective.
\item If $t=2$ and $f(x)$ is R-$q^2$-partially scattered, then $R_f$ is a canonical
$\Fq$-subgeometry of $\PG(2t'-1,q^2)$.
\end{enumerate}
\end{theorem}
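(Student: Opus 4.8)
The plan is to reduce all three items to one elementary dictionary between proportionality over the various fields and the scalar equation $f(y)/y=f(z)/z$. Note first that the graph map $y\mapsto(y,f(y))$ is an $\F_q$-linear bijection $\Fqn\to U_f$, so $\dim_{\F_q}U_f=n$; this immediately gives the dimension/rank assertions in A, B, C. Next, for $v=(y,f(y))$ and $v'=(z,f(z))$ in $U_f^*$ (so $y,z\in\F_{q^n}^*$) and any intermediate field $\F_{q^s}$ with $\F_q\subseteq\F_{q^s}\subseteq\Fqn$, one has $\la v\ra_{\F_{q^s}}=\la v'\ra_{\F_{q^s}}$ iff $v'=\lambda v$ for some $\lambda\in\F_{q^s}^*$, and taking $\lambda=z/y$ this holds iff $f(y)/y=f(z)/z$ \emph{and} $z/y\in\F_{q^s}$. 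Specialising $s\in\{n,t,1\}$ turns equality of points over $\Fqn$, $\Fqt$, $\Fq$ into, respectively, ``$f(y)/y=f(z)/z$'', ``$f(y)/y=f(z)/z$ and $z/y\in\Fqt$'', ``$f(y)/y=f(z)/z$ and $z/y\in\Fq$''. After this, the three items are essentially rephrasings of Definitions~\ref{elle} and~\ref{erre}.

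For (i): since $\Fqt\subseteq\Fqn$, the assignment $\la v\ra_{\Fqt}\mapsto\la v\ra_{\Fqn}$ is a well-defined surjection $R_f\to L_f$, hence $\#R_f\ge\#L_f$ with equality iff it is injective. By the dictionary, injectivity says precisely that $f(y)/y=f(z)/z$ forces $z/y\in\Fqt$, i.e. that $f(x)$ is $\Lt$partially scattered. For the first half of (ii): for $v=(y,f(y))\in U_f^*$ the weight $w(\la v\ra_{\Fqt})=\dim_{\F_q}(\la v\ra_{\Fqt}\cap U_f)$ is always at least $1$ because $\F_q v\subseteq\la v\ra_{\Fqt}\cap U_f$, and it exceeds $1$ iff $\lambda v\in U_f$ for some $\lambda\in\Fqt\setminus\Fq$, i.e. iff there exist $y,z\in\F_{q^n}^*$ with $f(z)/z=f(y)/y$ and $z/y\in\Fqt\setminus\Fq$. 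So $R_f$ is scattered (all weights $=1$) exactly when $f(x)$ is $\Rt$partially scattered. Under that hypothesis the surjection $\mu\colon\la v\ra_{\Fq}\mapsto\la v\ra_{\Fqt}$ is also injective: if $\la v\ra_{\Fqt}=\la v'\ra_{\Fqt}$ with $v,v'\in U_f^*$ then $v'\in\la v\ra_{\Fqt}\cap U_f=\F_q v$, whence $\la v'\ra_{\Fq}=\la v\ra_{\Fq}$; so $\mu$ is bijective.

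For (iii), with $t=2$, fix $\zeta\in\F_{q^2}\setminus\F_q$, so $\{1,\zeta\}$ is an $\F_q$-basis of $\F_{q^2}$. By the weight analysis in (ii), $R_f$ being scattered means $\zeta v\notin U_f$ for every $v\in U_f^*$, equivalently $U_f\cap\zeta U_f=\{0\}$. Hence $\dim_{\F_q}(U_f+\zeta U_f)=2\dim_{\F_q}U_f=2n=\dim_{\F_q}(\F_{q^n}^2)$, so $\F_{q^2}U_f=U_f+\zeta U_f=\F_{q^n}^2$, i.e. $\langle R_f\rangle=\PG(\F_{q^n}^2,\F_{q^2})=\PG(2t'-1,q^2)$. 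Being an $\F_q$-linear set of rank $n=2t'$ spanning the whole $\PG(2t'-1,q^2)$, $R_f$ is by the definition recalled in Section~\ref{s:uno} a canonical $\F_q$-subgeometry.

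I do not expect a genuine obstacle here; the work is bookkeeping. The points needing real care are the ``only if'' directions: verifying that the two comparison maps $R_f\to L_f$ and $\tilde U_f\to R_f$ are well defined and surjective, and translating ``weight $\ge2$'' correctly, where one must keep in mind that $\dim_{\F_q}\la v\ra_{\Fqt}=t$ and distinguish $\lambda\in\Fq$ from $\lambda\in\Fqt\setminus\Fq$. The hypothesis $t=2$ in (iii) is essential rather than cosmetic: for $t\neq2$ the rank $n=tt'$ of $R_f$ already exceeds the value $2t'$ that a canonical subgeometry of $\PG(2t'-1,q^t)$ would require, so no such conclusion can hold.
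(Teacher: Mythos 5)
Your proof is correct, and for A--C, (i) and (ii) it follows essentially the paper's own route: the comparison map $\la v\ra_{\Fqt}\mapsto\la v\ra_{\Fqn}$ for (i), and the analysis of $U_f\cap\la (y,f(y))\ra_{\Fqt}$ reducing scatteredness to condition \eqref{e:erre} for (ii), are exactly the paper's arguments, with the scalar dictionary just spelled out more explicitly. Where you genuinely diverge is (iii). The paper invokes the Lunardon--Polverino result that every linear set is a projection of a canonical subgeometry and then shows the vertex must be empty, because through every point of $\PG(2t'-1,q^2)$ there is a $(q+1)$-secant line to the subgeometry, so a nonempty vertex would contradict the injectivity of the projection guaranteed by scatteredness. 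You instead argue directly: for $t=2$, scatteredness is equivalent to $U_f\cap\zeta U_f=\{0\}$ for a fixed $\zeta\in\F_{q^2}\setminus\Fq$, whence $\F_{q^2}U_f=U_f\oplus\zeta U_f=\F_{q^n}^2$, so $R_f$ is a rank-$2t'$ $\Fq$-linear set spanning $\PG(2t'-1,q^2)$ and is a canonical subgeometry by the definition recalled in Section \ref{s:uno}. Your route is more elementary and self-contained (no projection machinery), at the cost of leaning on the span-based definition of canonical subgeometry; that reliance is harmless, since any $\Fq$-basis of $U_f$ consists of $2t'$ vectors that $\F_{q^2}$-span $\F_{q^n}^2$ and hence form an $\F_{q^2}$-basis, so the definition agrees with the classical one. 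The paper's argument, by contrast, keeps (iii) inside the projection framework it uses elsewhere. Your closing remark that $t=2$ is essential (for $t>2$ the rank $tt'$ exceeds $2t'$) is a correct observation not made in the paper's proof.
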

\begin{proof}
The assertions in A., B., and C. directly follow from the definition of $\Fq$-linear set
and from the fact that $U_f$ is an $n$-dimensional subspace of $V(\Fqn^2,\Fq)$.

The map $\alpha:\la v\ra_{\Fqt}\mapsto\la v\ra_{\Fqn}$ is well-defined, hence
$\#L_f\le\#R_f$.
The equality holds if and only if $\alpha$ is one-to-one, that is, for any $y,z\in\Fqn^*$
the relation $\la(y,f(y))\ra_{\Fqn}=\la (z,f(z))\ra_{\Fqn}$ implies
$\la(y,f(y))\ra_{\Fqt}=\la (z,f(z))\ra_{\Fqt}$; this is equivalent to \eqref{e:elle}.
Hence (i) holds.

As regards (ii), $R_f$ is scattered if and only if every 
$\la (y,f(y))\ra_{\Fqt}\in R_f$ has weight one.
Assume $(z,f(z))\in U_f^*\cap\la(y,f(y))\ra_{\Fqt}$.
This implies $y/z\in\Fqt$ and $f(y)/y=f(z)/z$.
So, assuming \eqref{e:erre} yields
\begin{equation}\label{e:r-w}
  U_f\cap\la(y,f(y))\ra_{\Fqt}=\la(y,f(y))\ra_{\Fq}.
\end{equation}
Similarly if \eqref{e:r-w} holds for any $y\in\Fqn^*$, then $f(x)$ is $\Rt$partially scattered.
The map $\mu$ is bijective by definition of scattered linear set.

If $t=2$ and $f(x)$ is R-$q^2$-partially scattered, then $R_f$ is a one-to-one projection of an 
$\Fq$-canonical subgeometry $\Sigma$ of $\PG(2t'-1,q^2)$ \cite{LuPo2004}.
Since for any point $P$ in $\PG(2t'-1,q^2)$ there is at least one $(q+1)$-secant line to $\Sigma$,
the vertex of the projection is empty.
\end{proof}

By Theorem \ref{rflf}~(ii), if $f(x)$ is an $\Rt$partially scattered $q$-polynomial,
then $R_f$ is a maximum scattered $\Fq$-linear set of $\PG(2t'-1,q^t)$,
satisfying the assumptions of the following: 

\begin{proposition}
If $R$ is a maximum scattered $\Fq$-linear set in $\Omega=\PG(r-1,q^t)$, then $\la R\ra=\Omega$.
\end{proposition}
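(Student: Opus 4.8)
The plan is to argue by contradiction: assuming $\langle R\rangle$ is a proper subspace of $\Omega$, I will produce a scattered $\Fq$-linear set of $\Omega$ of rank strictly larger than that of $R$, which contradicts maximality. First I would fix notation: write $\Omega=\PG(V,\Fqt)$ with $V$ an $r$-dimensional $\Fqt$-space, let $W\le V$ be an $\Fq$-subspace with $R=\{\la w\ra_{\Fqt}\colon w\in W\setminus\{0\}\}$, and set $k=\dim_\Fq W$, the rank of $R$. Scatteredness of $R$ means precisely that $W\cap\la w\ra_{\Fqt}$ has $\Fq$-dimension at most $1$ for every nonzero $w\in V$, i.e.\ $W$ meets each element of the normal $(t-1)$-spread of $\PG(V,\Fq)$ in at most one point. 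Now suppose $\langle R\rangle\ne\Omega$. Then $V_\Lambda:=\la W\ra_{\Fqt}$ is a proper $\Fqt$-subspace of $V$, so $\dim_\Fq W\le\dim_\Fq V_\Lambda<tr$ and there exists $v\in V\setminus V_\Lambda$. Since no nonzero $\Fq$-multiple of $v$ lies in $V_\Lambda\supseteq W$, the sum $W':=W\oplus\Fq v$ is direct, $\dim_\Fq W'=k+1$, and moreover $W'\cap V_\Lambda=W$.

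The heart of the argument is to check that $W'$ is again scattered. Let $w'=w+cv\ne 0$ with $w\in W$ and $c\in\Fq$. If $c=0$, then $\la w\ra_{\Fqt}\subseteq V_\Lambda$, so $\la w\ra_{\Fqt}\cap W'\subseteq V_\Lambda\cap W'=W$, which has $\Fq$-dimension at most $1$ because $W$ is scattered. If $c\ne 0$, scale to $c=1$; a relation $\beta(w+v)=w_1+c_1 v$ with $\beta\in\Fqt^\ast$, $w_1\in W$, $c_1\in\Fq$ rewrites as $\beta w-w_1=(c_1-\beta)v$, whose left side lies in $V_\Lambda$ while $v\notin V_\Lambda$; hence $c_1=\beta$, forcing $\beta=c_1\in\Fq$. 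Therefore $\la w'\ra_{\Fqt}\cap W'=\la w'\ra_{\Fq}$, again $1$-dimensional. Since every spread element meeting $W'$ in a nonzero vector is of the form $\la w'\ra_{\Fqt}$ for such a $w'$, the space $W'$ is scattered, so $L_{W'}$ is a scattered $\Fq$-linear set of $\Omega$ of rank $k+1>k$ — contradicting that $R$ is maximum scattered. Hence $\langle R\rangle=\Omega$.

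I expect the only step requiring care to be the case distinction in the second paragraph, namely verifying that enlarging $W$ by one $\Fq$-dimension along a direction outside $\la W\ra_{\Fqt}$ cannot create a point of weight $\ge 2$; but this is the elementary bookkeeping indicated above. A less elementary alternative would invoke the Blokhuis--Lavrauw bound \cite{BL2000}: inside a proper subspace $\Lambda\cong\PG(s-1,q^t)$ with $s\le r-1$, a scattered $\Fq$-linear set has rank at most $st/2\le(r-1)t/2<rt/2$, which is below the rank $rt/2$ of a maximum scattered linear set of $\Omega$ when $rt$ is even; the argument above is preferable in that it needs no parity hypothesis on $rt$ and no external rank bound.
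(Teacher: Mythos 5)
Your proof is correct and follows essentially the same strategy as the paper: adjoin a single vector lying outside $\la R\ra$ to the underlying $\Fq$-subspace, check that the enlarged subspace is still scattered, and contradict maximality. The only cosmetic difference is that the paper verifies scatteredness of the extension via a hyperplane containing $\la R\ra$ and a dimension count on the spread elements, while you do it by a direct element-wise computation in the $\F_{q^t}$-span; the two verifications are equivalent.
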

\begin{proof}
Let $R=L_U$ be associated to the $\Fq$-subspace $U$ of $V(\F_{q^t}^r,\Fq)$.
Assume $\dim_\Fq U=d$.
By definition, any $\Fq$-subspace of dimension greater than $d$ is not scattered with
respect to the $t$-spread $\cF=\{\la v\ra_{\Fqt}\colon v\in\F_{q^t}^r,\,v\neq0\}$  of $V(\F_{q^t}^r,\Fq)$.

Assume $\la R\ra\neq\Omega$; that is, there is a hyperplane $H$ of $V(\F_{q^t}^r,\F_{q^t})$
such that $\la v\ra_{\Fqt}\in R$ implies $v\in H$. In particular, $U\subseteq H$.

Take $z\in\F_{q^t}^r\setminus H$ and define $T=\la U,z\ra_\Fq$ that is a $(d+1)$-dimensional
subspace of  $V(\F^r_{q^t},\F_{q})$.

Let $F\in\cF$; then either $F\subseteq H$, or $F\cap H=\{0\}$.
If $F\subseteq H$, from $T\cap H=U$ one obtains $\dim_{\Fq}F\cap T\le 1$.
If $F\cap H=\{0\}$, then $F\cap U=\{0\}$,  and hence
\[
\dim_{\Fq}(F\cap T)=t+(d+1)-\dim_{\Fq}(F+T)\le t+(d+1)-\dim_{\Fq}(F+U)=1.
\]
Therefore, $T$ is scattered with respect to $\cF$, a contradiction.
\end{proof}

\begin{remark}
The $q$-polynomial $f(x)$ is $\Lt$partially scattered if and only if each element of 
the normal $(t'-1)$-spread $\cS'$ of $\PG(2t'-1,q^t)$ induced by
the point set of $\PG(\F_{q^n}^2,\F_{q^n})$ intersects $R_f$ in at most one point.
Defining a scattered point set with respect to a spread in the obvious way,
$R_f$ turns out to be scattered with respect to $\cS'$.
In particular, Theorem~\ref{rflf}~(iii) 
implies the existence of a scattered canonical subgeometry $R_f$ with respect to 
a normal spread.
\end{remark}

\subsection{Algebraic properties of partially scattered polynomials}

Some properties of partially scattered polynomials can be stated
in terms of the $q$-polynomials
$f_\rho(x)=f(\rho x)-\rho f(x)$, $\rho\in\Fqn^*$.
\begin{proposition}\label{p:frho}
\begin{enumerate}[(a)]
\item A $q$-polynomial $f(x)$ is $\Rt$partially scattered if and only if for any 
$\rho\in\Fqt\setminus\Fq$ the map $f_\rho(x)$ is bijective.
\item	A $q$-polynomial $f(x)$ is $L$-$q^t$-partially scattered if and only if for any $\rho \in 
\Fqn \setminus \F_{q^t}$ the map $f_{\rho}(x)$ is bijective.
\item	A $q$-polynomial $f(x)$ is scattered if and only if for any $\rho \in \Fqn \setminus \Fq$
the map $f_\rho(x)$ is bijective.
\end{enumerate}
\end{proposition}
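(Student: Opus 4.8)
The plan is to translate bijectivity of $f_\rho$ into a statement about its kernel and then match this, via a one-line dictionary, against Definitions \ref{elle} and \ref{erre}. First I would recall that $f_\rho(x)=f(\rho x)-\rho f(x)$ is again a $q$-polynomial — writing $f(x)=\sum_{i=0}^{n-1}a_ix^{q^i}$ one gets $f_\rho(x)=\sum_{i=0}^{n-1}a_i(\rho^{q^i}-\rho)x^{q^i}$ — hence an $\F_q$-linear endomorphism of the finite-dimensional space $\F_{q^n}$. Consequently $f_\rho$ is bijective if and only if it is injective, that is, if and only if $\ker f_\rho=\{0\}$.

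The key elementary observation is the following: for $\rho\in\F_{q^n}^*$ and $y\in\F_{q^n}^*$, putting $z=\rho y\in\F_{q^n}^*$ one has $z/y=\rho$ and
\[
f_\rho(y)=0\iff f(\rho y)=\rho f(y)\iff \frac{f(z)}{z}=\frac{f(y)}{y}.
\]
So $f_\rho$ has a nonzero kernel vector precisely when there exist $y,z\in\F_{q^n}^*$ with $f(y)/y=f(z)/z$ and $z/y=\rho$; conversely any such pair $(y,z)$ yields the nonzero kernel vector $y$ of $f_{z/y}$. I would also note that each of $\F_q$, $\F_{q^t}$, $\F_{q^n}$ is a field, so an element lies in it exactly when its inverse does; this lets me pass freely between the ratio $z/y$ fed to $f_\rho$ and the ratio $y/z$ appearing in the definitions, and it makes the sets $\F_{q^t}\setminus\F_q$, $\F_{q^n}\setminus\F_{q^t}$, $\F_{q^n}\setminus\F_q$ (which automatically avoid $0$) the natural ranges for $\rho$.

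With this in hand the three items are just contrapositive rewordings of the definitions. For (a): $f(x)$ is \emph{not} $\Rt$partially scattered iff there are $y,z\in\F_{q^n}^*$ with $f(y)/y=f(z)/z$, $y/z\in\F_{q^t}$ and $y/z\notin\F_q$; by the observation this is equivalent to $\ker f_\rho\neq\{0\}$ for some $\rho=z/y\in\F_{q^t}\setminus\F_q$, i.e.\ to the failure of the stated condition. Item (b) is the same argument with $\F_{q^t}$ in the role of $\F_q$ and $\F_{q^n}$ in the role of $\F_{q^t}$: $f(x)$ fails to be $\Lt$partially scattered iff $f_\rho$ is non-bijective for some $\rho\in\F_{q^n}\setminus\F_{q^t}$. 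Item (c) can be proved identically with the intermediate field taken to be $\F_q$, or deduced from (a), (b) and the remark following Definition \ref{erre} that $f(x)$ is scattered iff it is simultaneously $\Lt$ and $\Rt$partially scattered: since $t\mid n$ gives $\F_q\subseteq\F_{q^t}\subseteq\F_{q^n}$, the set $\F_{q^n}\setminus\F_q$ is the disjoint union of $\F_{q^n}\setminus\F_{q^t}$ and $\F_{q^t}\setminus\F_q$, so demanding the conditions of (a) and (b) together is exactly demanding that $f_\rho$ be bijective for every $\rho\in\F_{q^n}\setminus\F_q$.

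No step here is genuinely hard; the only care needed is in the quantifier bookkeeping when taking contrapositives (and in tracking whether $z/y$ or $y/z$ is the $\rho$ being fed to $f_\rho$), together with the reduction "bijective $\Leftrightarrow$ trivial kernel", which is what turns the arithmetic condition on pairs $(y,z)$ into a property of the single polynomial $f_\rho$.
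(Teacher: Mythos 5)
Your proof is correct. The paper states Proposition \ref{p:frho} without any proof, treating it as an immediate consequence of Definitions \ref{elle} and \ref{erre}; your argument — bijective iff injective for an $\Fq$-linear endomorphism of $\Fqn$, together with the observation that $f_\rho(y)=0$ for $y\neq 0$ exactly when the pair $(y,\rho y)$ witnesses a failure of the relevant implication, and the decomposition of $\Fqn\setminus\Fq$ as the disjoint union of $\Fqn\setminus\Fqt$ and $\Fqt\setminus\Fq$ for item (c) — is precisely the routine verification the authors leave to the reader, carried out with the quantifier and $\rho$-versus-$\rho^{-1}$ bookkeeping done correctly.
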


\begin{proposition}\label{p:facile}
If $t$ and $t'$ are relatively prime, then any $\Lt$partially scattered $q$-polynomial
is also R-$q^{t'}$-partially scattered. 
\end{proposition}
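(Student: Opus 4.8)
The plan is to use the characterization of partially scattered polynomials via the bijectivity of the maps $f_\rho(x) = f(\rho x) - \rho f(x)$ from Proposition~\ref{p:frho}. Concretely, I would start from the hypothesis that $f(x)$ is $\Lt$partially scattered, which by Proposition~\ref{p:frho}(b) means that $f_\rho(x)$ is bijective for every $\rho \in \Fqn \setminus \Fqt$. I want to conclude that $f(x)$ is $\mathrm{R}$-$q^{t'}$partially scattered, which by Proposition~\ref{p:frho}(a) (applied with $t$ replaced by $t'$) means that $f_\rho(x)$ is bijective for every $\rho \in \F_{q^{t'}} \setminus \Fq$. So the entire statement reduces to a purely set-theoretic containment of the relevant sets of $\rho$'s, together with the implication on bijectivity.

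The key step is therefore the inclusion $\F_{q^{t'}} \setminus \Fq \subseteq \Fqn \setminus \Fqt$. The containment $\F_{q^{t'}} \subseteq \Fqn$ is immediate since $t' \mid n$. For the other half I need $(\F_{q^{t'}} \setminus \Fq) \cap \Fqt = \emptyset$, i.e.\ $\F_{q^{t'}} \cap \Fqt \subseteq \Fq$. Here is exactly where the coprimality hypothesis $\gcd(t,t') = 1$ enters: inside the lattice of subfields of $\Fqn$, the intersection $\F_{q^{t'}} \cap \F_{q^t}$ equals $\F_{q^{\gcd(t,t')}} = \F_{q^1} = \Fq$. Hence any $\rho \in \F_{q^{t'}} \setminus \Fq$ lies in $\Fqn$ but not in $\Fqt$, so $f_\rho(x)$ is bijective by the $\Lt$partially scattered hypothesis via Proposition~\ref{p:frho}(b).

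Putting this together: for every $\rho \in \F_{q^{t'}} \setminus \Fq$ we have shown $\rho \in \Fqn \setminus \Fqt$, hence $f_\rho(x)$ is a bijection of $\Fqn$; by Proposition~\ref{p:frho}(a) with the roles of $t$ and $t'$ interchanged (note that $n = t't$ with $t' \neq 1$, so the proposition applies in the form it is stated), this says precisely that $f(x)$ is $\mathrm{R}$-$q^{t'}$partially scattered. I expect no serious obstacle here; the only point requiring a little care is to verify that Proposition~\ref{p:frho}(a) is genuinely symmetric in $t \leftrightarrow t'$ — the hypotheses of Section~\ref{s:due} ask only that $n = tt'$ with both factors in $\mathbb{N} \setminus \{0,1\}$, a condition invariant under swapping $t$ and $t'$, so the reformulation is legitimate. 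The proof is thus short, essentially one line once the subfield-intersection identity $\F_{q^{t'}} \cap \F_{q^t} = \F_{q^{\gcd(t,t')}}$ is invoked.
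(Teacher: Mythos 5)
Your proof is correct and follows essentially the same route as the paper: both reduce the statement to Proposition~\ref{p:frho}, using part~(b) for the $\Lt$partially scattered hypothesis and part~(a) (with $t$ and $t'$ swapped) for the conclusion, via the inclusion $\F_{q^{t'}}\setminus\Fq\subseteq\Fqn\setminus\Fqt$ coming from $\F_{q^{t'}}\cap\Fqt=\Fq$. You merely make explicit the subfield-intersection step that the paper compresses into the phrase ``in particular.''
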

\begin{proof}
By Proposition \ref{p:frho} and the hypothesis, $f_\rho$ is bijective for any $\rho \in 
\Fqn \setminus \F_{q^t}$, and in particular for $\rho\in\F_{q^{t'}}\setminus\Fq$.
\end{proof}
If the assumptions of Proposition \ref{p:facile} hold,
by Theorem \ref{rflf}~(ii)  the set
		$$M_f=\{\la (x,f(x))\ra_{\F_{q^{t'}}}\,:\,x \in \Fqn^*\}$$
		is a scattered $\F_q$-linear set  of $\PG(2t-1,q^{t'})$ of rank $n$.
Note that if $t'=2$,  as in Theorem \ref{rflf}~(iii), 
the $\F_q$-linear set $M_f$ is an $\F_q$-subgeometry of $\PG(2t-1,q^{2})$.

For any $\rho\in\Fqt$, the map $\Phi_\rho:\,f(x)\mapsto f_\rho(x)$ is an $\Fq$-endomorphism
of $\End_{\Fq}(\Fqn)$.
The kernel of $\Phi_\rho$ is the set of all $q$-polynomials which are $\Fq(\rho)$-linear, where $\Fq(\rho)$ is the field extension generated by $\rho$ over $\Fq$.
Indeed, there is a divisor $w$ of $n$ such that 
$\Fq(\rho)=\la1,\rho,\rho^2,\ldots,\rho^{w-1}\ra_{\Fq}$.

If $\Phi_\rho(f)$ is zero, then $f(\rho^kx)=\rho^kf(x)$ for any $k\in\mathbb N$ and this implies
that $f(x)$ is $\Fq(\rho)$-linear.
In particular this can be applied to the following propositions.

\begin{proposition}
For any $\Lt$partially scattered $q$-polynomial $f(x)$ over $\Fqn$ and any $m \in \Fqn$, 
$f(x)+mx$ is $\Lt$partially scattered.
\end{proposition}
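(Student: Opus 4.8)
The plan is to use the characterization of $\Lt$partially scattered polynomials in terms of the bijectivity of the maps $f_\rho(x)=f(\rho x)-\rho f(x)$ given in Proposition~\ref{p:frho}(b). Set $g(x)=f(x)+mx$ for a fixed $m\in\Fqn$. The key observation is that the operation $f\mapsto f_\rho$ behaves well with respect to addition of the linear map $mx$: more precisely, I would compute
\[
g_\rho(x)=g(\rho x)-\rho g(x)=f(\rho x)+m\rho x-\rho f(x)-\rho m x=f(\rho x)-\rho f(x)=f_\rho(x),
\]
since the term $m\rho x$ cancels exactly with $\rho m x$ (here $m,\rho\in\Fqn$ commute). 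Thus the associated polynomial $g_\rho$ is literally equal to $f_\rho$, unchanged by the translation.

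From here the proof is immediate. Since $f(x)$ is $\Lt$partially scattered, Proposition~\ref{p:frho}(b) tells us that $f_\rho(x)$ is bijective for every $\rho\in\Fqn\setminus\F_{q^t}$. By the identity just computed, $g_\rho(x)=f_\rho(x)$ is therefore bijective for every such $\rho$. Applying the converse direction of Proposition~\ref{p:frho}(b) to $g$, we conclude that $g(x)=f(x)+mx$ is $\Lt$partially scattered.

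There is essentially no obstacle here: the content is the trivial cancellation $m\rho x=\rho m x$, which holds because $\Fqn$ is commutative and $mx$ is an $\F_q$-linear (indeed $\Fqn$-linear) map. The only point worth a sentence of care is to note that $mx$ is a $q$-polynomial (the degree-zero case $a_0=m$, all other $a_i=0$), so $g(x)$ is again a legitimate $q$-polynomial in $\Fqn[x]$ and Proposition~\ref{p:frho} applies to it. One could alternatively argue directly from Definition~\ref{elle}: the condition $f(y)/y=f(z)/z$ is equivalent to $g(y)/y=(f(y)+my)/y=f(y)/y+m$ equalling $g(z)/z=f(z)/z+m$, so the hypotheses of \eqref{e:elle} for $g$ and for $f$ coincide, and hence so do the conclusions; but the $f_\rho$ route is cleaner and is the one suggested by the surrounding text.

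It is also worth remarking that the same cancellation shows the analogous statements for $\Rt$partially scattered and for scattered polynomials, using parts (a) and (c) of Proposition~\ref{p:frho} respectively; the present proposition is just the case recorded here because it is the one needed in the sequel.
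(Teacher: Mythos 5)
Your proof is correct and is essentially the argument the paper intends: the paper states this proposition without an explicit proof, but the preceding discussion of the maps $\Phi_\rho$ (whose kernel contains every $\Fqn$-linear map, in particular $x\mapsto mx$) together with Proposition~\ref{p:frho}(b) amounts precisely to your cancellation $(f+mx)_\rho=f_\rho$, just as in the paper's proof of Proposition~\ref{R-ps+tpolynomial} for the $\Rt$ case. Nothing further is needed.
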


\begin{proposition}\label{R-ps+tpolynomial}
	Let $f(x)$ be an $\Rt$partially scattered $q$-polynomial over $\Fqn$.
For 
$\aaa=(a_0,a_1,\ldots,a_{t'-1}) \in \Fqn^{t'}$ define the $q^t$-polynomial
\begin{equation}\label{e:gax}
 g_\aaa(x)=\sum_{i=0}^{t'-1}a_ix^{q^{it}}
\end{equation}
Then for any $\aaa \in \Fqn^{t'}$ the $q$-polynomial $f(x)+g_\aaa(x)$ 
is $\Rt$partially scattered as well.
	\end{proposition}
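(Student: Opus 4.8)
The plan is to use the characterization of $\Rt$partially scattered polynomials given in Proposition~\ref{p:frho}~(a): a $q$-polynomial $h(x)$ is $\Rt$partially scattered precisely when $h_\rho(x) = h(\rho x) - \rho h(x)$ is a bijection of $\Fqn$ for every $\rho \in \Fqt \setminus \Fq$. So I would fix such a $\rho$ and set $h(x) = f(x) + g_\aaa(x)$, and aim to show $h_\rho$ is bijective by showing it simply equals $f_\rho$.

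The key computation is that $g_\aaa$ behaves like an $\Fqt$-semilinear object under the operation $\Phi_\rho$. Concretely, since $\rho \in \Fqt$, for each $i$ we have $\rho^{q^{it}} = \rho$, hence
\[
 g_\aaa(\rho x) = \sum_{i=0}^{t'-1} a_i (\rho x)^{q^{it}} = \sum_{i=0}^{t'-1} a_i \rho^{q^{it}} x^{q^{it}} = \rho \sum_{i=0}^{t'-1} a_i x^{q^{it}} = \rho\, g_\aaa(x).
\]
Therefore $(g_\aaa)_\rho(x) = g_\aaa(\rho x) - \rho g_\aaa(x) = 0$, i.e.\ $g_\aaa$ lies in the kernel of $\Phi_\rho$ for every $\rho \in \Fqt$ — this is consistent with the remark in the excerpt that $\ker \Phi_\rho$ consists of the $\Fq(\rho)$-linear maps, since every $g_\aaa$ of the form \eqref{e:gax} is $\Fqt$-linear and $\Fq(\rho) \subseteq \Fqt$. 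Because $\Phi_\rho$ is an $\Fq$-endomorphism of $\End_{\Fq}(\Fqn)$, it follows that
\[
 h_\rho(x) = \Phi_\rho(f + g_\aaa)(x) = \Phi_\rho(f)(x) + \Phi_\rho(g_\aaa)(x) = f_\rho(x) + 0 = f_\rho(x).
\]
Since $f$ is $\Rt$partially scattered, $f_\rho$ is bijective by Proposition~\ref{p:frho}~(a); hence $h_\rho$ is bijective as well. As $\rho \in \Fqt \setminus \Fq$ was arbitrary, applying Proposition~\ref{p:frho}~(a) once more in the other direction gives that $h(x) = f(x) + g_\aaa(x)$ is $\Rt$partially scattered, completing the argument.

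There is no real obstacle here; the only point requiring a moment of care is the identity $\rho^{q^{it}} = \rho$ for $\rho \in \Fqt$, which is just the defining property of $\Fqt$ as the fixed field of $x \mapsto x^{q^t}$ inside $\Fqn$ (valid since $t \mid n$), and the observation that one must check bijectivity of $h_\rho$ for all $\rho$ in $\Fqt \setminus \Fq$ rather than just one. Everything else is the linearity of $\Phi_\rho$, already recorded in the text preceding the statement.
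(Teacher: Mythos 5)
Your proposal is correct and follows exactly the paper's argument: the paper likewise invokes Proposition~\ref{p:frho}~(a), notes that $(g_\aaa)_\rho(x)$ is the zero polynomial for every $\rho\in\Fqt\setminus\Fq$ (since $\rho^{q^{it}}=\rho$), and concludes that $(f+g_\aaa)_\rho=f_\rho$ is bijective. You have simply written out the computation the paper leaves implicit.
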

\begin{proof}
Since for all $\rho \in \Fqt \setminus \Fq$, 
$f_\rho(x)$ is bijective and $(g_\aaa)_{\rho}(x)$ is the zero polynomial, the result follows.
\end{proof}

\begin{sloppypar}
\begin{proposition}\label{p:costruzione}
Let $\aaa=(a_0,a_1,\ldots,a_{t'-1}) \in \Fqn^{t'}$, and
$g_\aaa(x)$ be the $q^t$-polynomial defined in \eqref{e:gax}.
\begin{enumerate}[(a)]
\item
For any $\Rt$partially scattered $q$-polynomial $\varphi(x)$, 
the $q$-polynomial $f(x)=(g_\aaa\circ\varphi)(x)$ is $\Rt$partially scattered if and only if
$g_\aaa(x)$ is bijective.
\item 
Let $s<n$ be a positive divisor of $n$.
If  $t$ and $s$ are relatively prime, and
$g_\aaa(x)$ is an $\Lt$partially scattered bijective polynomial, then 
$f_s(x)=g_\aaa(x^{q^s})$ is both an $\Rt$ and an R-$q^{s}$-partially scattered $q$-polynomial.
\end{enumerate}
\end{proposition}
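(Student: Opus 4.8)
The plan is to reduce both parts to the characterization of $\Rt$partially scattered polynomials furnished by Proposition~\ref{p:frho}(a), namely that $f$ is $\Rt$partially scattered precisely when $f_\rho(x)=f(\rho x)-\rho f(x)$ is bijective for every $\rho\in\Fqt\setminus\Fq$, together with the analogous statement for R-$q^s$- and $\Lt$partially scattered maps. For part (a), write $f=g_\aaa\circ\varphi$. The key computation is to relate $f_\rho$ to $\varphi_\rho$ and $g_\aaa$: since $g_\aaa(x)=\sum_i a_i x^{q^{it}}$ is a $q^t$-polynomial, for $\rho\in\Fqt$ we have $g_\aaa(\rho y)=\rho^{q^{it}}$-scaling, but $\rho^{q^t}=\rho$ because $\rho\in\Fqt$, so $g_\aaa(\rho y)=\rho\, g_\aaa(y)$; that is, $g_\aaa$ is $\Fqt$-linear. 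Hence for $\rho\in\Fqt$,
\[
f_\rho(x)=g_\aaa(\varphi(\rho x))-\rho\, g_\aaa(\varphi(x))
=g_\aaa\bigl(\varphi(\rho x)-\rho\,\varphi(x)\bigr)=g_\aaa(\varphi_\rho(x))=(g_\aaa\circ\varphi_\rho)(x).
\]
Since $\varphi$ is $\Rt$partially scattered, $\varphi_\rho$ is bijective for all $\rho\in\Fqt\setminus\Fq$. Therefore $f_\rho=g_\aaa\circ\varphi_\rho$ is bijective for all such $\rho$ if and only if $g_\aaa$ is bijective, which by Proposition~\ref{p:frho}(a) is exactly the assertion that $f$ is $\Rt$partially scattered.

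For part (b) I would again use Proposition~\ref{p:frho}, now in the form that characterizes R-$q^t$- and R-$q^s$-partially scattered maps via bijectivity of $(\cdot)_\rho$ for $\rho\in\Fqt\setminus\Fq$, resp.\ $\rho\in\F_{q^s}\setminus\Fq$. Set $f_s(x)=g_\aaa(x^{q^s})$, i.e.\ $f_s=g_\aaa\circ\sigma$ where $\sigma(x)=x^{q^s}$ is the $\Fq$-linear bijection. The crucial observation is that $\sigma$ commutes with scalar multiplication up to a Frobenius twist: $\sigma(\rho x)=\rho^{q^s}\sigma(x)$. Using again that $g_\aaa$ is $\Fqt$-linear (hence commutes with multiplication by any $\rho\in\Fqt$), compute for $\rho\in\Fqt$:
\[
(f_s)_\rho(x)=g_\aaa\bigl((\rho x)^{q^s}\bigr)-\rho\, g_\aaa(x^{q^s})
=\rho^{q^s} g_\aaa(x^{q^s})-\rho\, g_\aaa(x^{q^s})
=(\rho^{q^s}-\rho)\,f_s(x).
\]
Now $\rho^{q^s}=\rho$ iff $\rho\in\F_{q^s}$; since $\gcd(t,s)=1$ we have $\Fqt\cap\F_{q^s}=\Fq$, so for $\rho\in\Fqt\setminus\Fq$ the scalar $\rho^{q^s}-\rho$ is nonzero, and as $f_s$ itself is a bijection (composition of the bijections $g_\aaa$ and $x\mapsto x^{q^s}$), $(f_s)_\rho$ is bijective; by Proposition~\ref{p:frho}(a) this shows $f_s$ is $\Rt$partially scattered. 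The same display with $t$ replaced by $s$ gives, for $\rho\in\F_{q^s}$, $(f_s)_\rho(x)=(\rho^{q^s}-\rho)f_s(x)$ after checking $g_\aaa$ is also $\F_{q^s}$-linear when restricted to the relevant scalars — but here one must instead use $\Lt$partial scatteredness of $g_\aaa$. For $\rho\in\F_{q^s}\setminus\Fq$, $\rho\notin\Fqt$ (again by coprimality $\F_{q^s}\cap\Fqt=\Fq$), and one relates $(f_s)_\rho$ to $(g_\aaa)_{\rho}$ composed with $\sigma$: a short computation gives $(f_s)_\rho(x)=(g_\aaa)_{\rho^{q^s}}(x^{q^s})$-type expression; since $\rho^{q^s}\notin\Fqt$ as well, the hypothesis that $g_\aaa$ is $\Lt$partially scattered (Proposition~\ref{p:frho}(b)) makes $(g_\aaa)_{\rho^{q^s}}$ bijective, hence $(f_s)_\rho$ bijective, so $f_s$ is R-$q^s$-partially scattered.

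The main obstacle I anticipate is bookkeeping the Frobenius twists correctly in part (b): one has to track whether $g_\aaa$ should be evaluated at $\rho$ or at $\rho^{q^s}$, and confirm in each case that the relevant scalar lies outside $\Fqt$ (for the $\Lt$partially scattered hypothesis on $g_\aaa$) or that the scalar factor $\rho^{q^s}-\rho$ is nonzero (for the $\Rt$partially scattered conclusion). Both reductions hinge on the two elementary facts $\F_{q^s}\cap\F_{q^t}=\F_{q^{\gcd(s,t)}}=\Fq$ and that a $q^t$-polynomial is $\Fqt$-linear; once these are in place the computations are the short displayed identities above and Proposition~\ref{p:frho} closes each case.
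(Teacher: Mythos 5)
Your argument is correct. Part (a) coincides with the paper's proof: the identity $f_\rho=g_\aaa\circ\varphi_\rho$ for $\rho\in\Fqt$ together with Proposition~\ref{p:frho}~(a). In part (b) you take a mildly different route: the paper deduces the $\Rt$ claim from part (a) applied to $\varphi(x)=x^{q^s}$ (which is $\Rt$partially scattered precisely because $\Fqt\cap\F_{q^s}=\Fq$), and proves the R-$q^s$ claim by a direct manipulation of the defining ratios, using only the definition of $\Lt$partial scatteredness of $g_\aaa$; you instead handle both claims uniformly through the bijectivity criterion of Proposition~\ref{p:frho}, via the identities $(f_s)_\rho(x)=(\rho^{q^s}-\rho)f_s(x)$ for $\rho\in\Fqt$ and $(f_s)_\rho(x)=(g_\aaa)_\rho(x^{q^s})$ for $\rho\in\F_{q^s}$ (since $\rho^{q^s}=\rho$ there, your $(g_\aaa)_{\rho^{q^s}}$ is just $(g_\aaa)_\rho$). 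The first identity is handled by $\rho^{q^s}\neq\rho$ on $\Fqt\setminus\Fq$ together with bijectivity of $f_s$, the second by Proposition~\ref{p:frho}~(b); this is more systematic, at the cost of leaning on part (b) of that proposition, which the paper's ratio computation avoids. One step you should make explicit: in your first display for (b) you pull $\rho^{q^s}$ out of $g_\aaa$, which requires $\rho^{q^s}\in\Fqt$ and not merely $\rho\in\Fqt$; this is fine because $\Fqt$ is stable under $x\mapsto x^q$, but it deserves a word. With that noted, the proof is complete and equivalent in strength to the paper's.
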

\end{sloppypar}
\begin{proof}

(a)
By Proposition \ref{p:frho}~(a), it is enough to show that for any $\rho \in \Fqt \setminus \Fq$, the map $f_\rho(x)$ is bijective if and only $g_\aaa$ is. 
This follows from
$$f_\rho(x)=g_\aaa(\varphi(\rho x))-\rho g_\aaa(\varphi(x))=
g_\aaa(\varphi(\rho x))-g_\aaa(\rho \varphi(x))=g_\aaa(\varphi_\rho(x)).$$

(b)
Since $\varphi(x)=x^{q^s}$ is $\Rt$partially scattered, 
also $f_s(x)$ is $\Rt$partially
scattered by part (a). 
Assume
\[
\frac{f_s(y)}y=\frac{f_s(z)}z \quad \mbox{ for }y,z\in\Fqn^*,\ y=\ell z,\ 
\ell\in\F_{q^s}.
\]
Since $f_s(x)$ is bijective, $f_s(y) \not = 0 \not = f_s(z)$, and

\[\frac{f_s(y)}{f_s(z)}=\frac{y}z=\frac{y^{q^s}}{z^{q^s}}.\]

\noindent This implies

\[
\frac{g_\aaa(y^{q^s})}{y^{q^s}}=\frac{g_\aaa(z^{q^s})}{z^{q^s}},
\]
hence $\ell\in\Fqt\cap\F_{q^s}=\Fq$.
\end{proof}

\begin{remark}\label{r:costruzione}
If $\ker g_\aaa$ is not trivial, the polynomial $f_s(x)$ in Proposition \ref{p:costruzione}~(b)
is not $\Rt$partially  scattered. 
Indeed, let $y\in\F^*_{q^n}$ such that $y^{q^s}=u\in \ker g_\aaa\setminus\{0\}$ and consider $\ell$ 
an  element in $\F_{q^t}\setminus \F_{q}$. 
Since $\ker g_\aaa$ is an $\F_{q^t}$-vector space of $\F_{q^n}$, putting $z=\ell y$
$$f_s(y)=g_\aaa(u)=0=g_\aaa(\ell^{q^s}u)=f_s(\ell y)=f_s(z),$$
so $y/z \in \F_{q^t}$, but $y/z \not \in \F_{q}$.
\end{remark}

\subsection{\texorpdfstring{Examples}{Examples}}
As usual, if $\ell$ divides $m$,
\[
\tr_{q^m/q^\ell}(x)=x+x^{q^\ell}+x^{q^{2\ell}}+\cdots+x^{q^{m-\ell}}\quad\mbox{and}\quad
\N_{q^m/q^\ell}(x)=x^{\frac{q^{m}-1}{q^{\ell}-1}}
\]
denote the \emph{trace} and the \emph{norm} of $x\in\F_{q^{m}}$ over $\F_{q^{\ell}}$.

\begin{proposition}\label{1example}
	If $n=2t$, $s\in\mathbb N$ is coprime with $n$, $f(x)=\delta x^{q^s}+x^{q^{t+s}}$ and $\N_{q^{2t}/q^t}(\delta)\neq1$, then 
	$f(x)$ is $\Rt$partially scattered.
\end{proposition}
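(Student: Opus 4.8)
By Proposition~\ref{p:frho}~(a) it suffices to show that for every $\rho\in\F_{q^{2t}}\setminus\F_q$ the map
\[
f_\rho(x)=f(\rho x)-\rho f(x)=\delta(\rho^{q^s}-\rho)x^{q^s}+(\rho^{q^{t+s}}-\rho)x^{q^{t+s}}
\]
is bijective. Here, however, the relevant $\rho$ range over $\F_{q^t}\setminus\F_q\subseteq\F_{q^{2t}}$, so I will use the sharper characterization from Proposition~\ref{p:frho}~(a): $f(x)$ is $\Rt$-partially scattered iff $f_\rho$ is bijective for all $\rho\in\F_{q^t}\setminus\F_q$. Since $f_\rho$ is a $q^s$-polynomial in the variable $X=x^{q^s}$ of $q^t$-degree $1$ (note $\gcd(s,n)=1$ so $x\mapsto x^{q^s}$ is a bijection of $\F_{q^{2t}}$), bijectivity of $f_\rho$ is equivalent to bijectivity of $A\rho(X):=c_0 X+c_1 X^{q^t}$ where $c_0=\delta(\rho^{q^s}-\rho)$ and $c_1=\rho^{q^{t+s}}-\rho=(\rho^{q^s}-\rho)^{q^t}$; here I have used $\rho\in\F_{q^t}$, which gives $\rho^{q^{t+s}}=(\rho^{q^s})^{q^t}$ and also $\rho^{q^t}=\rho$, so $c_1=\rho^{q^s}-\rho=c_0/\delta$ is in fact forced — wait, that needs care, since $\rho^{q^s}$ need not lie in $\F_{q^t}$. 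Let me instead keep $c_0,c_1$ as written and proceed with the standard criterion.

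**Key step.** A $\F_q$-linearized map of the form $c_0 X + c_1 X^{q^t}$ on $\F_{q^{2t}}$ is a bijection if and only if $\N_{q^{2t}/q^t}(c_0)\neq \N_{q^{2t}/q^t}(c_1)$, equivalently $c_0^{1+q^t}\neq c_1^{1+q^t}$, equivalently $c_0^{q^t+1}-c_1^{q^t+1}\neq 0$ (this is the $2\times 2$ Dickson-matrix determinant condition, cf. the kernel of $X\mapsto c_0X+c_1X^{q^t}$ being nonzero iff $X^{q^t-1}=-c_0/c_1$ has a solution, i.e. iff $(-c_0/c_1)^{\frac{q^{2t}-1}{q^t-1}}=(-c_0/c_1)^{q^t+1}=1$). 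So the plan is: compute
\[
c_0^{q^t+1}=\delta^{q^t+1}(\rho^{q^s}-\rho)^{q^t+1},\qquad
c_1^{q^t+1}=(\rho^{q^{s}}-\rho)^{q^t(q^t+1)}=\big((\rho^{q^s}-\rho)^{q^t+1}\big)^{q^t},
\]
where the last equality uses $(\rho^{q^s}-\rho)^{q^t+1}\in\F_{q^t}$ — because $\N_{q^{2t}/q^t}$ of any element lies in $\F_{q^t}$ — hence it is fixed by the $q^t$-Frobenius, giving $c_1^{q^t+1}=(\rho^{q^s}-\rho)^{q^t+1}$. Therefore $f_\rho$ is non-bijective iff $\delta^{q^t+1}(\rho^{q^s}-\rho)^{q^t+1}=(\rho^{q^s}-\rho)^{q^t+1}$. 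Since $\rho\notin\F_q$... no: since $\rho\in\F_{q^t}\setminus\F_q$ and $\gcd(s,n)=1$, one has $\rho^{q^s}\neq\rho$ (indeed $\rho^{q^s}=\rho$ would force $\rho\in\F_{q^{\gcd(s,2t)}}=\F_q$), so $(\rho^{q^s}-\rho)^{q^t+1}\neq 0$ and we may cancel it, obtaining the non-bijectivity condition $\delta^{q^t+1}=1$, i.e. $\N_{q^{2t}/q^t}(\delta)=1$. Thus $\N_{q^{2t}/q^t}(\delta)\neq 1$ guarantees $f_\rho$ bijective for all such $\rho$, and by Proposition~\ref{p:frho}~(a) $f(x)$ is $\Rt$-partially scattered.

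**Main obstacle.** The delicate point is the precise range of $\rho$ and the claim $\rho^{q^s}\neq\rho$: I must invoke the exact statement of Proposition~\ref{p:frho}~(a), which only demands bijectivity of $f_\rho$ for $\rho\in\F_{q^t}\setminus\F_q$ — a weaker (hence easier) requirement than for all of $\F_{q^{2t}}\setminus\F_q$ — and I must be careful that $\rho\in\F_{q^t}$ does \emph{not} imply $\rho^{q^s}\in\F_{q^t}$, so the expression $(\rho^{q^s}-\rho)^{q^t+1}$ genuinely needs the norm-lies-in-subfield argument rather than a naive Frobenius-fixing claim on $\rho^{q^s}-\rho$ itself. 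The only other thing to check carefully is that the substitution $X=x^{q^s}$ really reduces bijectivity of $f_\rho(x)$ to that of $A_\rho(X)$; this is immediate since $x\mapsto x^{q^s}$ permutes $\F_{q^{2t}}$ when $\gcd(s,2t)=1$. Everything else is the routine $2\times 2$ Dickson determinant computation sketched above.
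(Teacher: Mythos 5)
Your proof is correct, and it takes a more hands-on route than the paper. The paper disposes of this in two lines by writing $f=g_\aaa\circ\varphi$ with $g_\aaa(x)=\delta x+x^{q^t}$ and $\varphi(x)=x^{q^s}$, noting that $\ker g_\aaa$ is trivial precisely when $\N_{q^{2t}/q^t}(\delta)\neq1$, and invoking Proposition~\ref{p:costruzione}~(a); your argument is essentially that lemma unwound in this special case, applying Proposition~\ref{p:frho}~(a) directly, computing $f_\rho$ explicitly and using the norm (Dickson determinant) criterion for bijectivity of $c_0X+c_1X^{q^t}$ on $\F_{q^{2t}}$. All the steps check out: the correct range $\rho\in\F_{q^t}\setminus\F_q$ from Proposition~\ref{p:frho}~(a), the reduction via the permutation $x\mapsto x^{q^s}$, the criterion $c_0^{q^t+1}\neq c_1^{q^t+1}$, the fact that $(\rho^{q^s}-\rho)^{q^t+1}\in\F_{q^t}$ is Frobenius-fixed, and $\rho^{q^s}\neq\rho$ from $\gcd(s,2t)=1$; the case $\delta=0$ is even covered automatically since then $c_0=0\neq c_1^{q^t+1}$. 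One small simplification you missed: for $\rho\in\F_{q^t}$ one has $\rho^{q^{t+s}}=(\rho^{q^t})^{q^s}=\rho^{q^s}$ (no need for $\rho^{q^s}$ to lie in $\F_{q^t}$), so $c_1=\rho^{q^s}-\rho=c_0/\delta$ outright, which collapses the key step to $\N_{q^{2t}/q^t}(\delta)\neq1$ immediately; your hedged detour through $c_1=(\rho^{q^s}-\rho)^{q^t}$ is nonetheless valid. What the two approaches buy: the paper's is shorter and modular (the same lemma also yields Proposition~\ref{p:cmmz} and the construction \eqref{R-ps-shape}), while yours is self-contained and makes explicit exactly when $f_\rho$ fails to be bijective, which is the computation behind the Remark following the proposition.
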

\begin{proof}
	Let $g_\aaa(x)=\delta x+x^{q^t}$.
	The kernel of $g_\aaa(x)$ is trivial if, and only if, $\N_{q^{2t}/q^t}(\delta)\neq1$.
	So, the assumptions of the Proposition \ref{p:costruzione}~(a) hold with $\varphi(x)=x^{q^s}$.
\end{proof}

\begin{remark}
	If $n=2t$, $\gcd(s,n)=1$ and $f(x)=\delta x^{q^s}+x^{q^{t+s}}$, then
	\[
	f_\rho(x)=\left(\rho^{q^{t+s}}-\rho\right)x^{q^{t+s}}+\delta(\rho^{q^s}-\rho)x^{q^s}.
	\]
Next, assume $\N_{q^{2t}/q^t}(\delta)=d$.
For $\rho\not\in\Fq$, the map $f_\rho$ is not bijective if and only if
\begin{equation}\label{e:otto}
	\left(\frac{\rho^{q^{t+s}}-\rho}{\rho^{q^s}-\rho}\right)^{q^t+1}=d,
	\end{equation}
Therefore, for given $t$ and $s$, the property of a polynomial of the form 
$\delta x^{q^s}+x^{q^{t+s}}\in\F_{q^{2t}}[x]$
to be scattered (or $\Lt$, or $\Rt$partially scattered) only depends on $\N_{q^{2t}/q^t}(\delta)$.
Such polynomials include those investigated in \cite{CMPZ}.
In particular, since in \cite[Theorem 7.2]{CMPZ} it is shown that for
odd $q$, $t=4$ and $\delta=\sqrt{-1}$
the polynomial $f(x)$ is scattered, then $f(x)$ is scattered for any $\delta$ such that 
$\N_{q^{2t}/q^t}(\delta)=-1$.
Computations with GAP show that for $t=4$, $s=1$, $q=3$ or $q=5$, the condition $\N_{q^{2t}/q^t}(\delta)=-1$ 
is also  necessary for $f(x)$ to be scattered.
Write
\begin{equation}\label{e:rho}
\rho=\rho_1+\rho_2,\quad \rho_1\in\F_{q^t},\ \rho_2\in\ker\tr_{q^{2t}/q^t}.
\end{equation}
Combining \eqref{e:otto} and \eqref{e:rho} gives
\begin{equation}\label{e:rho2}
(\rho_1^q-\rho_1)^2=\rho_2^2+\rho_2^{2q}.
\end{equation}
Equations \eqref{e:rho} and \eqref{e:rho2} then have no solutions with $\rho_2\neq0$ in case $t=4$.
On the other hand, solutions for any $q=3,5,7$ and $5\le t\le 8$ have been found by computer.
\end{remark}

\begin{remark}\label{r:controes-b}
	Since $g_\aaa(x)=\delta x+x^{q^t}\in\F_{q^{2t}}[x]$ is $\Lt$partially scattered for any $\delta\in\F_{q^{2t}}$,
	the last examples in the previous Remark satisfy the conditions in Proposition \ref{p:costruzione}~(b),
	but are not $\Lt$partially scattered.
\end{remark}

\begin{proposition}
	If $n=2t$, $s$ is coprime with $n$, $f(x)=\delta x^{q^s}+x^{q^{t+s}}$ and $\N_{q^{2t}/q^t}(\delta)=1$, then 
	$f(x)$ is $\Lt$partially scattered, but it is not $\Rt$partially scattered.
\end{proposition}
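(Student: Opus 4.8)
The plan is to reduce the statement to the bijectivity criterion of Proposition~\ref{p:frho}, using the explicit description of $f_\rho$ already available for polynomials of this shape. Here $n=2t$ (so $t'=2$) and $f(x)=\delta x^{q^s}+x^{q^{t+s}}$ with $\gcd(s,n)=1$, hence
$$f_\rho(x)=\bigl(\rho^{q^{t+s}}-\rho\bigr)x^{q^{t+s}}+\delta\bigl(\rho^{q^s}-\rho\bigr)x^{q^s},$$
and, as recorded in \eqref{e:otto}, for $\rho\notin\Fq$ the map $f_\rho$ is not bijective precisely when $\left(\frac{\rho^{q^{t+s}}-\rho}{\rho^{q^s}-\rho}\right)^{q^t+1}=\N_{q^{2t}/q^t}(\delta)=1$ (the denominator being nonzero, since $\rho^{q^s}=\rho$ would force $\rho\in\F_{q^{\gcd(s,n)}}=\Fq$). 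So the whole problem reduces to deciding, for $\rho\notin\Fq$, when that $(q^t+1)$-th power equals $1$; this complements Proposition~\ref{1example}, which treats the case $\N_{q^{2t}/q^t}(\delta)\neq1$.

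The key step is the corresponding algebraic identity. Writing $a=\rho$, $b=\rho^{q^s}$, $c=\rho^{q^t}$ and $e=\rho^{q^{t+s}}$, and using $x^{q^{2t}}=x$ (so that $a^{q^t}=c$, $c^{q^t}=a$, $b^{q^t}=e$, $e^{q^t}=b$), I would rewrite the $(q^t+1)$-th power as a quotient of norms,
$$\left(\frac{\rho^{q^{t+s}}-\rho}{\rho^{q^s}-\rho}\right)^{q^t+1}=\frac{(e-a)(b-c)}{(b-a)(e-c)},$$
and then expand to get $(e-a)(b-c)-(b-a)(e-c)=(a-c)(e-b)=-(\rho^{q^t}-\rho)(\rho^{q^t}-\rho)^{q^s}$. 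Since the denominator $(b-a)(e-c)=(\rho^{q^s}-\rho)(\rho^{q^s}-\rho)^{q^t}$ is nonzero for $\rho\notin\Fq$, this shows that for such $\rho$ the $(q^t+1)$-th power equals $1$ if and only if $\rho^{q^t}=\rho$, i.e.\ $\rho\in\F_{q^t}$. Equivalently, for $\rho\notin\Fq$ the map $f_\rho$ fails to be bijective exactly when $\rho\in\F_{q^t}$.

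Both assertions then follow immediately. Every $\rho\in\Fqn\setminus\F_{q^t}$ lies outside $\Fq$ and outside $\F_{q^t}$, so $f_\rho$ is bijective; by Proposition~\ref{p:frho}(b), $f(x)$ is $\Lt$partially scattered. On the other hand, since $t\ge2$ the set $\F_{q^t}\setminus\Fq$ is nonempty, and any $\rho$ in it yields a non-bijective $f_\rho$, so by Proposition~\ref{p:frho}(a) the polynomial $f(x)$ is not $\Rt$partially scattered.

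I expect the only real obstacle to be the identity in the second paragraph: it is a short but error-prone manipulation in $\F_{q^{2t}}$, where one must keep careful track of the Frobenius action on $\rho,\rho^{q^s},\rho^{q^t},\rho^{q^{t+s}}$ and verify that the denominators occurring genuinely vanish only on $\Fq$. Everything else is a direct appeal to Proposition~\ref{p:frho} and to \eqref{e:otto}.
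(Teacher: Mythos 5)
Your proof is correct, but it follows a different route from the paper's. The paper splits the two claims: for the negative (``not $\Rt$partially scattered'') part it simply observes that $f(x)=g_\aaa(x^{q^s})$ with $g_\aaa(x)=\delta x+x^{q^t}$, whose kernel is nontrivial exactly when $\N_{q^{2t}/q^t}(\delta)=1$, and invokes Remark~\ref{r:costruzione}, which produces explicit witnesses $y$, $z=\ell y$ with $f(y)=f(z)=0$ and $\ell\in\Fqt\setminus\Fq$; for the positive part it verifies Definition~\ref{elle} directly, reducing $f(y)/y=f(z)/z$ and $y/z\in\Fqt$ to the identity $\bigl(\delta^{q^t+1}-1\bigr)\bigl(y^{q^{t+s}-q^s}-z^{q^{t+s}-q^s}\bigr)=0$. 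You instead treat both halves uniformly through Proposition~\ref{p:frho} and the non-bijectivity criterion \eqref{e:otto}, and your norm-quotient computation $\bigl(\frac{\rho^{q^{t+s}}-\rho}{\rho^{q^s}-\rho}\bigr)^{q^t+1}=\frac{(e-a)(b-c)}{(b-a)(e-c)}$ with $(e-a)(b-c)-(b-a)(e-c)=(a-c)(e-b)=-(\rho^{q^t}-\rho)^{1+q^s}$ is correct (the denominator check via $\gcd(s,n)=1$ is the right one, and the argument is insensitive to the degenerate case $\rho^{q^{t+s}}=\rho$). Your version actually yields a slightly sharper statement -- when $\N_{q^{2t}/q^t}(\delta)=1$ the maps $f_\rho$, $\rho\notin\Fq$, fail to be bijective precisely for $\rho\in\Fqt\setminus\Fq$ -- which gives both conclusions at once and meshes with Proposition~\ref{1example} and the remark containing \eqref{e:otto}; the trade-off is that it leans on \eqref{e:otto}, which the paper only asserts in a remark (its proof is the standard norm condition for $Ax^{q^t}+Bx$, so this is a harmless dependence, but the paper's own proof avoids it and, for the negative half, is essentially immediate).
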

\begin{proof}
	Note that if $\N_{q^{2t}/q^t}(\delta)=1$, $\ker (\delta x+x^{q^t})$ is not trivial,
	then $f(x)$ is not  $\Rt$partially scattered (cf. Remark \ref{r:costruzione}).
	
	Next, assume
	\begin{equation}\label{e:pese}
	\frac{f(y)}y=\frac{f(z)}z,\quad y,z\in\F_{q^{2t}}^*.
	\end{equation}
	If $f(y)=f(z)=0$, then $\delta=-y^{q^{t+s}-q^s}=-z^{q^{t+s}-q^s}$, so $(y/z)^{q^{t+s}-q^s}=1$.
	This implies $y/z\in\F_{q^t}$.
	Otherwise $f(z)\neq0$ may be assumed without loss of generality.
	The condition (\ref{e:pese}) implies 
	\[
	\frac yz=\frac{\delta y^{q^s}+y^{q^{t+s}}}{\delta z^{q^s}+z^{q^{t+s}}}.
	\]
	The quotient $y/z$ belongs to $\F_{q^t}$ if and only if $y/z=(y/z)^{q^t}$, that is
	\[
	\frac{\delta+y^{q^{t+s}-q^s}}{\delta+z^{q^{t+s}-q^s}}= \frac{\delta^{q^t}y^{q^{t+s}-q^s}+1}{\delta^{q^t}z^{q^{t+s}-q^s}+1},
	\]
	equivalently:
	\begin{gather*}
	(\delta+y^{q^{t+s}-q^s})(\delta^{q^t}z^{q^{t+s}-q^s}+1)=(\delta^{q^t}y^{q^{t+s}-q^s}+1)(\delta+z^{q^{t+s}-q^s})\quad
	\Leftrightarrow\\
	\left(\delta^{q^{t}+1}-1\right)\left(y^{q^{t+s}-q^s}-z^{q^{t+s}-q^s}\right)=0.
	\end{gather*}
	Then $f(x)$ is $\Lt$partially scattered.
\end{proof}

\begin{proposition}\label{p:cmmz}
	Assume $n=3t$, $s$ coprime with $n$ and $\delta\in\F_{q^{3t}}$.
	If $\tr_{q^{3t}/q^t}(\delta)-\N_{q^{3t}/q^t}(\delta)\neq2$, then
	$f(x)=x^{q^s}+x^{q^{t+s}}+\delta x^{q^{2t+s}}$ is an $\Rt$partially scattered $q$-polynomial.
\end{proposition}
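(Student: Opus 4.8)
The plan is to write $f(x)$ as a composition to which Proposition~\ref{p:costruzione}~(a) applies, thereby reducing the claim to the nonsingularity of a single $3\times3$ Dickson matrix.

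First I would put $t'=n/t=3$, take $\aaa=(1,1,\delta)\in\Fqn^{3}$ and $\varphi(x)=x^{q^s}$, so that the $q^t$-polynomial of \eqref{e:gax} is $g_\aaa(x)=x+x^{q^t}+\delta x^{q^{2t}}$ and
\[
f(x)=x^{q^s}+x^{q^{t+s}}+\delta x^{q^{2t+s}}=(g_\aaa\circ\varphi)(x).
\]
Since $\gcd(s,n)=1$, the polynomial $\varphi(x)=x^{q^s}$ is scattered: if $\varphi(y)/y=\varphi(z)/z$ with $y,z\in\Fqn^*$, then $(y/z)^{q^s-1}=1$, so the multiplicative order of $y/z$ divides $\gcd(q^s-1,q^n-1)=q^{\gcd(s,n)}-1=q-1$, whence $y/z\in\Fq$. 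In particular $\varphi$ is $\Rt$partially scattered, so by Proposition~\ref{p:costruzione}~(a) it suffices to prove that $g_\aaa(x)$ is bijective if and only if $\tr_{q^{3t}/q^t}(\delta)-\N_{q^{3t}/q^t}(\delta)\neq2$.

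To this end I would regard $g_\aaa$ as a $q^t$-polynomial over $\Fqn$, viewed as a cubic extension of $\F_{q^t}$; then $g_\aaa$ is bijective precisely when its Dickson matrix (formula \eqref{dickson} with $q$ replaced by $q^t$) is nonsingular. This matrix is
\[
D=\begin{pmatrix} 1 & 1 & \delta\\ \delta^{q^t} & 1 & 1\\ 1 & \delta^{q^{2t}} & 1\end{pmatrix},
\]
and a direct expansion of the determinant, followed by grouping the monomials in $\delta,\delta^{q^t},\delta^{q^{2t}}$, yields
\[
\det D=2-\bigl(\delta+\delta^{q^t}+\delta^{q^{2t}}\bigr)+\delta^{1+q^t+q^{2t}}=2-\tr_{q^{3t}/q^t}(\delta)+\N_{q^{3t}/q^t}(\delta).
\]
Hence $\det D\neq0$ if and only if $\tr_{q^{3t}/q^t}(\delta)-\N_{q^{3t}/q^t}(\delta)\neq2$, which is the hypothesis, and the proof is complete. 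The only step demanding any care is this determinant evaluation together with the recognition of $\delta+\delta^{q^t}+\delta^{q^{2t}}$ and $\delta^{1+q^t+q^{2t}}=\delta^{(q^{3t}-1)/(q^t-1)}$ as the relative trace and norm of $\delta$; I do not anticipate any serious obstacle.
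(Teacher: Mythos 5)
Your proposal is correct and follows essentially the same route as the paper: factor $f(x)=(g_\aaa\circ\varphi)(x)$ with $\varphi(x)=x^{q^s}$ and $g_\aaa(x)=x+x^{q^t}+\delta x^{q^{2t}}$, invoke Proposition~\ref{p:costruzione}~(a), and reduce to the nonsingularity of the $3\times3$ Dickson matrix of $g_\aaa$ viewed as a $q^t$-polynomial, whose determinant is $2-\tr_{q^{3t}/q^t}(\delta)+\N_{q^{3t}/q^t}(\delta)$. Your explicit determinant evaluation and the check that $x^{q^s}$ is scattered (via $\gcd(q^s-1,q^n-1)=q-1$) simply spell out steps the paper leaves implicit.
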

\begin{proof}
	The polynomial $g(x)=x+x^{q^t}+\delta x^{q^{2t}}$ is bijective if, and only if, its Dickson matrix $D_g$  \eqref{dickson} is non-singular, and this is equivalent to
	$\tr_{q^{3t}/q^t}(\delta)-\N_{q^{3t}/q^t}(\delta)\neq2$.
	Then the assertion holds by Proposition \ref{p:costruzione}~(a). 
\end{proof}
\begin{remark}\label{controes-a}
		If $t=2$ and $\delta^2+\delta=1$, then $\delta\in\F_{q^2}$, hence 
$\tr_{q^{3t}/q^t}(\delta)-\N_{q^{3t}/q^t}(\delta)=1+\delta\neq2$.
So, the assumptions of Proposition \ref{p:cmmz} are satisfied for the polynomial $f(x)=x^q+x^{q^3}+\delta x^{q^5} \in \F_{q^6}[x]$.
		If $q$ is odd, then the polynomial $f(x)$ is also L-$q^2$-partially scattered \cite{CsMZ2018,MMZ}.
If $q$ is even, then $f(x)$ provides an example of a polynomial satisfying the assumptions of
Proposition \ref{p:costruzione}~(a) which is not $\Lt$scattered \cite[Theorem 1.1]{MMZ}.
\end{remark}

Next, let $q$ be odd, $t>1$ an integer, and define
\begin{eqnarray*}
  T_s(x)&=&\left(\tr_{q^{2t}/q^t}(x)\right)^{q^s},\\
  U_k(x)&=&\left(x-x^{q^t}\right)^{q^k}\quad\mbox{for $x\in\F_{q^{2t}}$, $s,k=0,1,\ldots,2t-1$.}
\end{eqnarray*}
The $q$-polynomial $f_{s,k}(x)=T_s(x)+U_k(x)\in\F_{q^{2t}}[x]$ is bijective:
indeed, $\im T_s=\ker U_k=\Fqt$,
$ \ker T_s=\im U_k=\ker(\tr_{q^{2t}/q^t})$,
and \[\Fqt\cap\ker(\tr_{q^{2t}/q^t})=\{0\}.\]
\begin{sloppypar}
\begin{proposition}\label{p:nflike}
If $\gcd(s,2t)=1=\gcd(k,2t)$, then $f_{s,k}(x)$ is an $\Rt$partially scattered $q$-polynomial.
\end{proposition}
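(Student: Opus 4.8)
The plan is to verify the criterion of Proposition~\ref{p:frho}~(a): it suffices to show that the $q$-polynomial $(f_{s,k})_\rho(x)=f_{s,k}(\rho x)-\rho f_{s,k}(x)$ is bijective for every $\rho\in\Fqt\setminus\Fq$. The first step is to put $(f_{s,k})_\rho$ into a transparent form. Since $\rho\in\Fqt$ we have $\rho^{q^t}=\rho$, hence $\tr_{q^{2t}/q^t}(\rho x)=\rho x+\rho^{q^t}x^{q^t}=\rho\,\tr_{q^{2t}/q^t}(x)$ and likewise $\rho x-(\rho x)^{q^t}=\rho\,(x-x^{q^t})$. Raising the first identity to the $q^s$-th power and the second to the $q^k$-th power, and pulling out the scalar, gives $T_s(\rho x)=\rho^{q^s}T_s(x)$ and $U_k(\rho x)=\rho^{q^k}U_k(x)$, so that
\[
(f_{s,k})_\rho(x)=(\rho^{q^s}-\rho)\,T_s(x)+(\rho^{q^k}-\rho)\,U_k(x).
\]

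The second step is to observe that both coefficients lie in $\Fqt^*$. They lie in $\Fqt$ because $\Fqt$ is closed under the Frobenius map; and $\rho^{q^s}=\rho$ would mean $\rho\in\F_{q^s}\cap\Fqt=\F_{q^{\gcd(s,t)}}=\Fq$, since $\gcd(s,2t)=1$ forces $\gcd(s,t)=1$, contradicting $\rho\notin\Fq$. The same reasoning with $k$ in place of $s$ gives $\rho^{q^k}-\rho\in\Fqt^*$.

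The third step uses the structural facts recorded just before the statement: $\im T_s=\ker U_k=\Fqt$, $\ker T_s=\im U_k=\ker(\tr_{q^{2t}/q^t})$, and $\Fqt\cap\ker(\tr_{q^{2t}/q^t})=\{0\}$. Since $\rho^{q^s}-\rho,\rho^{q^k}-\rho\in\Fqt^*$ and $\ker(\tr_{q^{2t}/q^t})$ is an $\Fqt$-subspace of $\F_{q^{2t}}$, multiplication by these scalars carries $\im T_s$ onto $\Fqt$ and $\im U_k$ onto $\ker(\tr_{q^{2t}/q^t})$. Therefore, if $(f_{s,k})_\rho(x)=0$ then $(\rho^{q^s}-\rho)T_s(x)=-(\rho^{q^k}-\rho)U_k(x)$ belongs to $\Fqt\cap\ker(\tr_{q^{2t}/q^t})=\{0\}$, which forces $T_s(x)=U_k(x)=0$, i.e.\ $x\in\ker T_s\cap\ker U_k=\ker(\tr_{q^{2t}/q^t})\cap\Fqt=\{0\}$. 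Hence $(f_{s,k})_\rho$ has trivial kernel, so it is bijective, and Proposition~\ref{p:frho}~(a) gives the result.

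I do not foresee a real obstacle: the argument is essentially direct-sum bookkeeping once $(f_{s,k})_\rho$ is written in the displayed form. The only delicate point is that the scalars $\rho^{q^s}-\rho$ and $\rho^{q^k}-\rho$ must not disturb the complementary images $\Fqt$ and $\ker(\tr_{q^{2t}/q^t})$; this is ensured precisely because $\rho$ is taken in $\Fqt$, so the scalars lie in $\Fqt^*$. This is also why the hypothesis concerns $\Rt$partially scattered polynomials: for the $\Lt$ version one would test $\rho\in\F_{q^n}\setminus\Fqt$, and the two images would no longer be complementary $\Fqt$-subspaces.
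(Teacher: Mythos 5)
Your proposal is correct and follows essentially the same route as the paper: the same decomposition $(f_{s,k})_\rho(x)=(\rho^{q^s}-\rho)T_s(x)+(\rho^{q^k}-\rho)U_k(x)$ into the complementary $\Fqt$-subspaces $\Fqt$ and $\ker(\tr_{q^{2t}/q^t})$, followed by Proposition~\ref{p:frho}~(a). The only difference is that you spell out the details the paper leaves implicit (the nonvanishing of the scalars via $\gcd(s,2t)=\gcd(k,2t)=1$ and the trivial-kernel argument), which is a faithful expansion rather than a new argument.
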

\end{sloppypar}
\begin{proof}
For $\rho\in\Fqt$, 
\[
  (f_{s,k})_\rho(x)=(\rho^{q^s}-\rho)T_s(x)+(\rho^{q^k}-\rho)U_k(x),
\]
where $(\rho^{q^s}-\rho)T_s(x)\in\Fqt$ and $(\rho^{q^k}-\rho)U_k(x)\in\ker(\tr_{q^{2t}/q^t})$. 
This implies
that $(f_{s,k})_\rho$ is bijective.
The thesis follows by Proposition \ref{p:frho}~(a).
\end{proof}
In the particular case $s=t-k$,
\[
  f_{t-k,k}(x)=x^{q^k}+x^{q^{t-k}}-x^{q^{t+k}}+x^{q^{2t-k}},
\]
which is the polynomial $2\psi^{(k)}_{2t}$  defined in \cite{nf}.
Under the additional assumption that $q\equiv1\pmod4$ in case $t$ is odd, such a polynomial is also
$\Lt$partially scattered \cite{nf}.
Computations with the software GAP show that the polynomials as in Proposition \ref{p:nflike}
are not always scattered.
For instance, they are not for $(n,s,k)=(5,1,3)$ and $q\le9$.

\subsection{Duality}

Consider the non-degenerate symmetric bilinear form $\la \cdot,\cdot \ra$ of $\Fqn$ over $\Fq$ defined by
\begin{equation}\label{e:bilform}
\la x, y \ra= \tr_{q^n/q}(xy).
\end{equation}
The adjoint map $\hat{f}(x)$ of an $\Fq$-linear map $f(x) = \sum_{i=0}^{n-1}a_ix^{q^i}$ 
with respect to the bilinear form \eqref{e:bilform} 
is 
\begin{equation}\label{c:hatf}
\hat{f}(x) =\sum_{i=0}^{n-1}a^{q^{n-i}}_ix^{q^{n-i}}.
\end{equation}

Let $\eta : \V \times \V \rightarrow \Fq$ be the non-degenerate alternating bilinear form of $\V=
\Fqn^2$, seen as a vector space  over $\Fq$, defined by 
$$\eta((x_1,y_1),(x_2,y_2))  = \tr_{q^n/q}(x_1y_2-x_2y_1)$$
and denote by $\perp$ the orthogonal complement map defined by $\eta$ on the
lattice of the $\Fq$-subspaces of $\V$.
It is straightforward to see that $U_f^\perp=U_{\hat{f}}$.

\begin{sloppypar}
\begin{theorem}[\cite{BaGiMaPo} Lemma 2.6,  \cite{CMP} Lemma 3.1]\label{e:adjline}
Let  $f(x)$ be a  $q$-polynomial over $\Fqn$. %
Then  $w_{L_f}(P)=w_{L_{\hat{f}}}(P)$ for any $P \in \PG(1, q^n)$. 
In particular, $L_f = L_{\hat{f}}$ and the maps defined by $f(x)/x$ and $\hat{f}(x)/x$ have the same image.
\end{theorem}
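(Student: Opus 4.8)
The plan is to derive the weight identity directly from the relation $U_f^\perp=U_{\hat f}$ recorded just above, exploiting the special position of the point-subspaces of $\PG(1,q^n)$ with respect to the alternating form $\eta$. Fix a point $P\in\PG(1,q^n)$ and let $\Lambda_P$ denote the $n$-dimensional $\Fq$-subspace of $\V=\Fqn^2$ underlying $P$, so that by definition $w_{L_f}(P)=\dim_{\Fq}(\Lambda_P\cap U_f)$ and $w_{L_{\hat f}}(P)=\dim_{\Fq}(\Lambda_P\cap U_{\hat f})$. The first step is to observe that $\Lambda_P$ is totally isotropic for $\eta$: writing $P=\langle(a,b)\rangle_{\Fqn}$, for $\lambda,\mu\in\Fqn$ one has $\eta((a\lambda,b\lambda),(a\mu,b\mu))=\tr_{q^n/q}\big(ab(\lambda\mu-\mu\lambda)\big)=0$. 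Since $\dim_{\Fq}\Lambda_P=n=\tfrac12\dim_{\Fq}\V$ and $\eta$ is non-degenerate, this forces $\Lambda_P^\perp=\Lambda_P$.

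The second step is a dimension count. Using $\Lambda_P^\perp=\Lambda_P$, the given identity $U_f^\perp=U_{\hat f}$, the elementary fact $(X+Y)^\perp=X^\perp\cap Y^\perp$, and $\dim_{\Fq}X^\perp=2n-\dim_{\Fq}X$ (valid because $\eta$ is non-degenerate), I obtain
\[
\dim_{\Fq}(\Lambda_P\cap U_{\hat f})=\dim_{\Fq}(\Lambda_P^\perp\cap U_f^\perp)=\dim_{\Fq}\big((\Lambda_P+U_f)^\perp\big)=2n-\dim_{\Fq}(\Lambda_P+U_f).
\]
Grassmann's identity gives $\dim_{\Fq}(\Lambda_P+U_f)=n+n-\dim_{\Fq}(\Lambda_P\cap U_f)$, and substituting yields $\dim_{\Fq}(\Lambda_P\cap U_{\hat f})=\dim_{\Fq}(\Lambda_P\cap U_f)$, i.e.\ $w_{L_{\hat f}}(P)=w_{L_f}(P)$, as claimed.

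The two ``in particular'' assertions then follow formally. A point lies in a linear set exactly when its weight is positive, so $w_{L_f}=w_{L_{\hat f}}$ immediately gives $L_f=L_{\hat f}$. For the statement about images, note that $\langle(0,1)\rangle_{\Fqn}$ never belongs to $L_f$ (a vector $(x,f(x))\in U_f$ proportional to $(0,1)$ forces $x=0$), so the points of $L_f$ are precisely the $\langle(1,m)\rangle_{\Fqn}$ with $m$ in the image of $x\mapsto f(x)/x$ on $\Fqn^*$; the analogous description holds for $\hat f$, and $L_f=L_{\hat f}$ forces the two images to coincide. The only step requiring genuine care is the self-orthogonality $\Lambda_P^\perp=\Lambda_P$; once $U_f^\perp=U_{\hat f}$ is granted, the remainder is routine bookkeeping with the dimension formulas, and I do not anticipate any real obstacle.
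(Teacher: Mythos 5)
Your argument is correct. Note that the paper itself offers no proof of Theorem \ref{e:adjline}: the statement is quoted from \cite{BaGiMaPo} (Lemma 2.6) and \cite{CMP} (Lemma 3.1). Your proof is precisely the duality argument that the surrounding text sets up --- the paper introduces the non-degenerate alternating form $\eta$ and records $U_f^\perp=U_{\hat{f}}$ immediately before the statement --- and all the steps check out: each point-subspace $\Lambda_P$ of $\PG(1,q^n)$ is totally isotropic of $\Fq$-dimension $n=\tfrac12\dim_{\Fq}\V$, hence $\Lambda_P^\perp=\Lambda_P$, and the weight equality then follows from $(X+Y)^\perp=X^\perp\cap Y^\perp$, $\dim_{\Fq}X^\perp=2n-\dim_{\Fq}X$ and Grassmann, exactly as you wrote; the two ``in particular'' claims are handled correctly as well. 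For comparison, the cited sources' argument can also be phrased without the symplectic form: for $P=\la(1,m)\ra_{\Fqn}$ one has $w_{L_f}(P)=\dim_{\Fq}\ker\bigl(f(x)-mx\bigr)$, and since $\hat{f}(x)-mx$ is the adjoint of $f(x)-mx$ with respect to \eqref{e:bilform}, the two maps have equal rank and hence equal kernel dimension, while $\la(0,1)\ra_{\Fqn}$ has weight $0$ for both $L_f$ and $L_{\hat{f}}$; this yields the same conclusion and in addition makes the image statement for $f(x)/x$ and $\hat{f}(x)/x$ immediate. Either route is fine, and yours fits naturally with the framework the paper has already established.
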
 
\end{sloppypar}

\begin{sloppypar}
\begin{proposition} 
	Let $f(x)$ be a $q$-polynomial over $\F_{q^n}$.
	Then $f(x)$ is $\Lt$partially scattered ($\Rt$partially scattered) if and only if $\hat{f}(x)$ is 
	$\Lt$partially scattered ($\Rt$partially scattered). 
\end{proposition}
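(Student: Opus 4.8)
The plan is to reduce both equivalences to the bijectivity criterion of Proposition \ref{p:frho}. Recall that $f$ is $\Rt$partially scattered exactly when $f_\rho(x)=f(\rho x)-\rho f(x)$ is bijective for every $\rho\in\Fqt\setminus\Fq$, and $\Lt$partially scattered exactly when $f_\rho$ is bijective for every $\rho\in\Fqn\setminus\Fqt$. Since each of these two ranges of $\rho$ is unchanged when $f$ is replaced by $\hat f$, it suffices to prove that, for every fixed $\rho\in\Fqn^*$, the map $f_\rho$ is bijective if and only if $(\hat f)_\rho(x)=\hat f(\rho x)-\rho\hat f(x)$ is bijective.

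The key step is the identity $\widehat{f_\rho}=-(\hat f)_\rho$, relating the adjoint of $f_\rho$ to the $\rho$-construction applied to $\hat f$. I would prove it by computing the adjoints of the two summands of $f_\rho$ with respect to the form \eqref{e:bilform}: from $\tr_{q^n/q}(f(\rho x)\,y)=\tr_{q^n/q}(\rho x\,\hat f(y))=\tr_{q^n/q}(x\,\rho\hat f(y))$ one reads off that the adjoint of $x\mapsto f(\rho x)$ is $x\mapsto\rho\hat f(x)$, and likewise that the adjoint of $x\mapsto\rho f(x)$ is $x\mapsto\hat f(\rho x)$; since taking adjoints is $\Fq$-linear, subtracting gives $\widehat{f_\rho}(x)=\rho\hat f(x)-\hat f(\rho x)=-(\hat f)_\rho(x)$. (Equivalently, the same fact can be extracted geometrically from $U_f^\perp=U_{\hat f}$ by tracking how $\perp$ acts on $U_{f_\rho}$.)

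Once this identity is in hand the argument closes at once: a linear map over $\Fq$ and its adjoint with respect to a non-degenerate bilinear form have the same rank, so $f_\rho$ is bijective iff $\widehat{f_\rho}=-(\hat f)_\rho$ is bijective iff $(\hat f)_\rho$ is bijective. Feeding this into Proposition \ref{p:frho} for the two relevant sets of values of $\rho$ yields both equivalences in the statement, and the case of scattered polynomials (Proposition \ref{p:frho}(c)) is recovered as the intersection of the two. I do not expect a genuine obstacle; the only point requiring care is the bookkeeping in the adjoint computation — keeping straight the side on which $\rho$ multiplies and the overall sign — after which everything is formal.
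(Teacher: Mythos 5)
Your proof is correct. It takes a slightly different route from the paper's: the paper argues directly from Definitions \ref{elle} and \ref{erre}, starting from the pointwise relation $\hat f(\lambda y)=\lambda\hat f(y)$, ``applying the adjoint map to both sides'' to obtain the corresponding relation for $f$, and using involutivity of $f\mapsto\hat f$ to get both directions; you instead route everything through the bijectivity criterion of Proposition \ref{p:frho}, proving the operator identity $\widehat{f_\rho}=-(\hat f)_\rho$ and using that a map and its adjoint with respect to a non-degenerate form have the same rank. The two arguments rest on the same mechanism --- $f_\rho$ and $(\hat f)_\rho$ are adjoint up to sign, so one is bijective exactly when the other is --- but your version makes explicit a point the paper glosses over: a relation holding at one specific $y$ cannot literally be ``adjointed''; what transfers is the nontriviality of $\ker(\hat f)_\lambda$, so the element witnessing $f(\lambda y')=\lambda f(y')$ may differ from $y$, which is harmless because only the ratio $\lambda$ matters. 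The paper's proof is shorter and needs nothing beyond the definitions and the involutivity of the adjoint, while yours is more transparent, leans on Proposition \ref{p:frho} (stated in the paper without proof), and recovers the scattered case at once via Proposition \ref{p:frho}(c).
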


\begin{proof}
Since the adjoint map $f(x) \mapsto \hat{f}(x)$ is involutory, it is enough to prove sufficiency. 
Assume $y,z,\lambda\in\F_{q^n}^*$ and
\begin{equation*}
\frac{\hat f(y)}y=\frac{\hat f(z)}z,\quad\lambda y=z,
\end{equation*}
then $\hat{f}(\lambda y)=\lambda \hat{f}(y)$.
By applying the adjoint map to both sides of the equation above, $\lambda f(y)=f(\lambda y)$, and
\begin{equation*}
\frac{f(y)}y=\frac{f(z)}z.
\end{equation*}
This is enough to deduce that $\hat{f}(x)$ is $\Lt$partially scattered (resp.\ $\Rt$partially scattered)
if $f(x)$ is $\Lt$partially scattered (resp.\ $\Rt$partially scattered).
\end{proof}
\end{sloppypar}

\begin{remark}
As mentioned in Proposition \ref{e:adjline}, if $f(x)$ is a linearized polynomial over $\Fqn$ and $\hat{f}(x)$ is its adjoint, then  $L_f=L_{\hat{f}}$.  
On the other hand, the linear sets $R_f$ and $R_{\hat{f}}$ in $\PG(2t'-1,q^t)$ do not necessarily coincide.
For example, consider $f(x)=x^q$ and suppose that $R_f=R_{\hat{f}}$. 
Then for any $x \in \Fqn^*$ there exists $y \in \Fqn^*$ such that
$$\la (x,x^q)\ra_{\Fqt}=\la(y,y^{q^{n-1}}) \ra_{\Fqt},$$
hence one gets that there exists $\lambda \in \Fqt$ such that 
$$\lambda^{q^{n-1}-1}=x^{q-q^{n-1}}.$$
Since the left hand side of the equation above is an element of $\Fqt$, 
$$(x^{q-q^{n-1}})^{q^t-1}=1,$$
for all $x \in \Fqn^*$, and this is equivalent to
\begin{equation*}
z^{(q^{n-2}-1)(q^t-1)}=1
\end{equation*}
and hence, modulo $z^{q^n}-z$,
\begin{equation*}
z^{(q^{n-t}-1)(q^2-1)}=1
\end{equation*}
for all $z \in \Fqn^*$. So, $q^{n}-1$ divides  $(q^{n-t}-1)(q^2-1)$, 
obtaining a contradiction since $t \geq 2$.
\end{remark}

\section{\texorpdfstring{Partially scattered $q$-polynomials and MRD-codes}%
{Partially scattered q-polynomials and MRD-codes}}\label{s:tre}

 
As retraced in Section \ref{s:uno}, each element of $\End_{\F_q}(\F_{q^n})$	can be represented 
as a unique linearized polynomial over $\F_{q^n}$ of $q$-degree less than $n$ and vice versa. Then, the set $\tilde{\mathcal{L}}_{n,q}$ of these $q$-polynomials with the usual sum, scalar multiplication and  product $\circ$, the functional composition modulo $x^{q^n}-x$, is an algebra over $\Fq$ isomorphic to the algebra of square matrices $\F_{q}^{n \times n }$.
Hence, any RM-code (resp.\ $\Fq$-linear RM-code)  $\cC \subset \F_q^{n \times n}$ might be regarded as a 
suitable subset  (resp.\ subspace) of $\tilde{\mathcal{L}}_{n,q}$ and any 
related notion or property may 
be translated in the setting of linearized polynomials.
Moreover, it is straighforward to see that $\{\hat{f} \colon f \in \cC\}$ (cf. \eqref{c:hatf}) is the adjoint code $\cC^\top$ of $\cC$ and, considered $b : \tilde{\mathcal{L}}_{n,q} \times \tilde{\mathcal{L}}_{n,q} \rightarrow \Fq$ the bilinear form given by
$$b(f, g) = \tr_{q^n/q} \Biggl (\sum^{n-1}_{i=0}f_ig_i \Biggr )$$
where $f(x) =\sum^{n-1}_{i=0}f_ix^{q^i}$ and $g(x) =\sum^{n-1}_{i=0} g_ix^{q^i}$,
the Delsarte dual code of $\cC$ is 
$$\cC^\perp =\{f \in \tilde{\mathcal{L}}_{n,q} \colon b(f,g)=0, \forall g \in \cC \}.$$
In  \cite[Section 5]{Sh}, Sheekey explicated a link between  maximum scattered $\Fq$-linear sets of 
$\PG(1, q^n)$ and $\Fq$-linear MRD-codes with minimum distance $d=n-1$.
In contrast to previous research, in this section we will focus on codes of non-square matrices $n \times t$, where $t$ is a divisor of $n$.

\begin{proposition}\label{p:lat-rps}
\begin{enumerate}[(a)]
Let $f(x)$ be $\Rt$partially scattered. Then
\item
In $V(\Fqn,\Fq)$, $\ker f$ is a scattered $\Fq$-subspace with respect to the normal
$t$-spread $\cN=\{\la v\ra_{\Fqt}\colon v\in\Fqn^*\}$; that is, $\dim(\ker f \cap \la v \ra_{\Fqt}) \leq 1$ for any $v \in \F^*_{q^n}$.
\item
$\dim_{\Fq}\ker f\le n/2$.
\item
There exists $\aaa=(a_0,a_1,\ldots,a_{t'-1})\in\Fqn^{t'}$ such that the $\Fq$-dimension of the 
kernel of 
$f(x)+g_\aaa(x)$ (cf.\ \eqref{e:gax}) is at least $t'$.
\end{enumerate}
\end{proposition}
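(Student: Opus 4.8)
The plan is to prove the three parts in order, using the characterization of $\Rt$partially scattered polynomials via the maps $f_\rho$ (Proposition~\ref{p:frho}~(a)) and via the scattered linear set $R_f$ (Theorem~\ref{rflf}~(ii)).

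For part (a), I would argue by contrapositive at the level of the linear set $R_f$. Suppose $v=(y,f(y))\in U_f^*$ and $\dim_{\Fq}(\ker f\cap\la v\ra_{\Fqt})\ge 2$. Since $v$ lies in this intersection only if $f(y)=0$, i.e.\ $v=(y,0)$ with $y\in\ker f$, the hypothesis gives a two-dimensional $\Fq$-subspace of $\ker f$ inside $\la y\ra_{\Fqt}$, hence there is $\ell\in\Fqt\setminus\Fq$ with $\ell y\in\ker f$. Then $f(\ell y)/(\ell y)=0=f(y)/y$ and $\ell y/y=\ell\in\Fqt\setminus\Fq$, contradicting \eqref{e:erre}. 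So $\ker f$ is scattered with respect to $\cN$; equivalently, viewing $\ker f$ as giving a set of points in $\PG(\Fqn,\Fqt)$, each spread element of $\cN$ meets it in at most one point. (Alternatively one can observe that $\ker f$ corresponds to the points of $R_f$ lying on the line $X_2=0$, and these inherit weight one from Theorem~\ref{rflf}~(ii).)

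For part (b), once $\ker f$ is scattered with respect to the normal $t$-spread $\cN$ of $V(\Fqn,\Fq)\cong V(q^{t'},q^t)$, I would invoke the standard bound on scattered subspaces with respect to a spread: a subspace scattered with respect to an $(t-1)$-spread of $\PG(2t'-1,q^t)$—wait, here the ambient space is $\PG(t'-1,q^t)$ of $\Fq$-dimension $n=tt'$, with $t$-spread $\cN$—has projective dimension at most $n/2-1$, i.e.\ $\dim_{\Fq}\ker f\le n/2$. This is exactly the Blokhuis--Lavrauw bound \cite{BL2000} cited in the introduction (applied with $r=t'$); since $\ker f$ is scattered with respect to $\cN$ by part (a), the bound applies directly.

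For part (c), the idea is that adding a $q^t$-polynomial $g_\aaa$ does not disturb $\Rt$partial scatteredness (Proposition~\ref{R-ps+tpolynomial}), and one has enough freedom in the $t'$ coefficients $a_0,\dots,a_{t'-1}\in\Fqn$ to force a kernel of dimension $\ge t'$. Concretely, pick any nonzero $y_0\in\Fqn$; I want to choose $\aaa$ so that $f(\ell y_0)+g_\aaa(\ell y_0)=0$ for all $\ell\in\Fqt$, which (since $g_\aaa$ is $\Fqt$-linear and $\Fqt$ has $\Fq$-dimension $t$) would give a kernel containing the $\Fqt$-line $\la y_0\ra_{\Fqt}$ of $\Fq$-dimension $t\ge t'$... but $t$ need not exceed $t'$, so instead the cleaner route is: the map $\aaa\mapsto g_\aaa$ is an $\Fq$-linear map from $\Fqn^{t'}$ (dimension $nt'$) into $\End_{\Fq}(\Fqn)$, and evaluation $\aaa\mapsto g_\aaa(y_0)=\sum_i a_i y_0^{q^{it}}$ is $\Fqn$-linear and \emph{surjective} onto $\Fqn$ as soon as the $y_0^{q^{it}}$ are not all zero, with kernel of $\Fqn$-codimension $1$. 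So I choose $\aaa$ with $g_\aaa(y_0)=-f(y_0)$; this puts $y_0$ in $\ker(f+g_\aaa)$. To get a kernel of dimension $\ge t'$, iterate: the simultaneous conditions $g_\aaa(y_j)=-f(y_j)$ for $t'$ suitably chosen elements $y_0,\dots,y_{t'-1}$ amount to $t'$ $\Fqn$-linear conditions on the $\Fqn^{t'}$-worth of parameters $\aaa$, hence are solvable, provided the evaluation map $\aaa\mapsto(g_\aaa(y_0),\dots,g_\aaa(y_{t'-1}))$ is surjective onto $\Fqn^{t'}$. The main obstacle—and the step I would spend the most care on—is verifying this surjectivity: it reduces to showing the $t'\times t'$ matrix $(y_j^{q^{it}})_{i,j}$ over $\Fqn$ can be made invertible by a good choice of the $y_j$, which is a Moore-type (Dickson) determinant condition; choosing $y_j$ in a suitable transversal (e.g.\ so that $y_0,\dots,y_{t'-1}$ are $\Fqt$-linearly independent) makes this determinant nonzero by the standard nonvanishing criterion for $q^t$-Moore matrices. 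With such $\aaa$ the kernel of $f(x)+g_\aaa(x)$ contains $\la y_0,\dots,y_{t'-1}\ra_{\Fq}$, which has $\Fq$-dimension $t'$, completing the proof.
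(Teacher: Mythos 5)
Your proposal is correct and follows essentially the same route as the paper: (a) is the direct application of \eqref{e:erre} to two $\Fq$-independent elements of $\ker f$ in a spread element, (b) is the Blokhuis--Lavrauw bound from \cite{BL2000} applied to the scattered subspace $\ker f$, and (c) is the interpolation of $-f$ at an $\Fqt$-basis by a $q^t$-polynomial $g_\aaa$, which you merely make explicit via the nonvanishing of the $q^t$-Moore determinant instead of quoting that $q^t$-polynomials of $q^t$-degree less than $t'$ realize all $\Fqt$-linear endomorphisms of $\Fqn$. (Only cosmetic caveat: in (a) the spread element should be taken in $V(\Fqn,\Fq)$, not in $U_f$, as you in fact do after the first sentence.)
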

\begin{proof}
(a)
If $y,z\in\ker f\cap \la v\ra_{\Fqt}$ and $y,z,v\in\Fqn^*$, then
\eqref{e:erre} applies, hence $\dim_{\Fq}\ker f\cap \la v\ra_{\Fqt}=1$. 

(b)
This follows from the previous assertion and \cite[Theorem 4.3]{BL2000}.

(c)
Let $v_1$, $v_2$, $\ldots$, $v_{t'}$ be an $\Fqt$-basis of $\Fqn$.
Then there exists a $q^t$-polynomial $g_\aaa(x)=\sum_{i=0}^{t'-1}a_ix^{q^{ti}}$ such that
$
g_\aaa(v_j)=-f(v_j)$, $j=1,2,\ldots,t'
$.
One obtains a $q$-polynomial
$f(x)+g_\aaa(x)$ vanishing on $t'$ $\Fq$-linearly independent elements of $\Fqn$.
\end{proof}

In case $t$ and $t'$ are relatively prime and $f(x)$ is an $\Lt$partially scattered polynomial,
then $f(x)$ is R-$q^{t'}$-partially scattered, and Proposition \ref{p:lat-rps} can be applied.
The question arises of what the dimension of the kernel of an $\Rt$partially scattered polynomial
can be.

Gathering the results in Proposition \ref{p:costruzione}~(a) and Proposition \ref{p:lat-rps}~(c), one obtains that any polynomial of the form
\begin{equation}\label{R-ps-shape}
g_\aaa(x^\gamma)-mx^{q^{kt}},
\end{equation}
where $g_\aaa(x)$ is a bijective $q^t$-polynomial, $\gamma$ a generator of $\Gal(\Fqt / \Fq)$, 
$m \in \Fqn^*$ and $0 \leq k \leq t'-1$,
is an $\Rt$partially scattered polynomial.
Studying the size of the kernel of a  polynomial  as in \eqref{R-ps-shape} is equivalent to analyzing the kernel of
$$x^{q^{n-kt}} \circ (g_\aaa(x^\gamma)-mx^{q^{kt}}),$$
i.e.\ of a polynomial of the form
$$g_{\mathbf{b}}(x^\gamma)-\ell x,$$
where $g_{\mathbf{b}}(x)$ is an appropriate bijective $q^t$-polynomial and $\ell \in  \Fqn^*$.
Finally, since 
$$\ker( g_\mathbf{b}(x^\gamma)-\ell x) = \ker(\ell^{-1}g_\mathbf{b}(x^\gamma)- x),$$
by Theorem 1.5, assertion 1., in \cite{OlgaFer2}, the $\Rt$partially scattered polynomials as in 
\eqref{R-ps-shape} have kernel of dimension at most $t'$.
In \cite{delacruz} an \emph{$s$-almost MRD-code} is an RM-code with minimum distance $d=d^*-s$,
where $d^*$ denotes the maximum allowed by the Singleton-like bound.
By the above arguments the following holds:
\begin{theorem}
Assume that the map $g_\aaa(x)$ defined in \eqref{e:gax} is bijective.
Let $\gamma$ be a generator of $\Gal(\F_{q^t} / \Fq)$, and $k\in\mathbb N$.
Then $\la g_{\aaa}(x^{\gamma}),x^{q^{kt}}\ra_{\Fqn}$ is an $s$-almost MRD-code in
$\F_q^{n\times n}$ with $s\le n/t-1$.
\end{theorem}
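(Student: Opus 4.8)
Throughout, set $t'=n/t$ and regard $\cC:=\la g_{\aaa}(x^{\gamma}),\,x^{q^{kt}}\ra_{\Fqn}$ as a subspace of $\tilde{\mathcal L}_{n,q}\cong\F_q^{n\times n}$. First I would fix the parameters of $\cC$ as a rank metric code. Writing $\gamma=q^{j}$ with $\gcd(j,t)=1$ (so $j\not\equiv0\pmod t$, as $t\ge2$), the reduced $q$-polynomial $g_{\aaa}(x^{\gamma})=\sum_{i=0}^{t'-1}a_i x^{q^{ti+j}}$ has all its monomials at exponents $\equiv j\pmod t$; reducing modulo $n$ these exponents are pairwise distinct and none of them equals $kt\equiv0\pmod t$. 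Since $g_{\aaa}$ is bijective we have $g_{\aaa}(x^{\gamma})\ne0$, so $g_{\aaa}(x^{\gamma})$ and $x^{q^{kt}}$ are $\Fqn$-linearly independent and $\dim_{\Fq}\cC=2n$. By the Singleton-like bound, the minimum distance of an RM-code of dimension $2n$ in $\F_q^{n\times n}$ is at most $d^{*}=n-1$.

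Next I would bound $d(\cC)$ from below, by proving that every nonzero $F\in\cC$ satisfies $\dim_{\Fq}\ker F\le t'$, equivalently $\rk F\ge n-t'$. Write $F=\alpha\,g_{\aaa}(x^{\gamma})+\beta\,x^{q^{kt}}$ with $(\alpha,\beta)\ne(0,0)$. If $\alpha=0$, then $F$ is a nonzero scalar multiple of the bijective map $x^{q^{kt}}$, so $\rk F=n$. If $\alpha\ne0$ and $\beta=0$, then $F=\alpha\,g_{\aaa}(x^{\gamma})$ is a composition of bijective maps (scaling, the Frobenius $x\mapsto x^{\gamma}$, and $g_{\aaa}$), so again $\rk F=n$. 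If $\alpha\beta\ne0$, then $\ker F=\ker\bigl(g_{\aaa}(x^{\gamma})-m\,x^{q^{kt}}\bigr)$ with $m=-\alpha^{-1}\beta\in\Fqn^{*}$; after replacing $k$ by its residue modulo $t'$ (which leaves the map unchanged modulo $x^{q^n}-x$) this is exactly a polynomial of the form \eqref{R-ps-shape}, and by the discussion preceding the statement — composition with $x^{q^{\,n-kt}}$, the normalization $\ker(g_{\mathbf b}(x^{\gamma})-\ell x)=\ker(\ell^{-1}g_{\mathbf b}(x^{\gamma})-x)$, and Theorem~1.5, assertion~1, of \cite{OlgaFer2} — its kernel has $\Fq$-dimension at most $t'=n/t$. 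Hence $\rk F\ge n-t'$ in all cases, and therefore $d(\cC)\ge n-t'$.

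Combining the two estimates, $\cC$ is an $s$-almost MRD-code with $s=d^{*}-d(\cC)\le(n-1)-(n-t')=t'-1=n/t-1$, which is the assertion. There is no genuinely new difficulty here: the one substantive input, namely the bound $\dim_{\Fq}\ker\le t'$ for polynomials of the shape \eqref{R-ps-shape}, was already established in the paragraph above through \cite{OlgaFer2}; the only things that need care are checking that the three cases above exhaust the codewords of $\cC$ and the harmless reduction $k\mapsto k\bmod t'$.
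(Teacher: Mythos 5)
Your proof is correct and follows essentially the same route as the paper: the theorem there is stated as a consequence of the immediately preceding discussion (reduction of $g_{\aaa}(x^{\gamma})-mx^{q^{kt}}$ to $g_{\mathbf b}(x^{\gamma})-\ell x$ and the kernel bound $t'=n/t$ from Theorem 1.5 of \cite{OlgaFer2}), combined with the Singleton-like bound. You merely make explicit the routine details the paper leaves implicit (the dimension count $\dim_{\Fq}\cC=2n$, the case analysis on $\alpha,\beta$, and the reduction $k\mapsto k\bmod t'$), all of which are handled correctly.
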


Up to the knowledge of the authors of this paper, no $\Rt$partially scattered
polynomial is known having a kernel with $\Fq$-dimension greater than $t'$.
Given  a $q$-polynomial $f(x)$ over $\Fqn$ and an element $\sigma \in \Fqn^*$, 
consider the following $\Fq$-linear map:
	$$f_{t,\sigma}: x \in \F_{q^{t}} \mapsto f(\sigma x) \in \Fqn.$$

\begin{sloppypar}
\begin{theorem}\label{R-ps_MRDsquare}
	Let $f(x)$ be a $q$-polynomial over $\Fqn$.
	\begin{enumerate}[(a)]
		\item  Let $k$ be a non-negative integer. Then $f(x)$ is an $\Rt$partially scattered polynomial if and only if 
\begin{equation}\label{e:cfsk}
\cC_{f,\sigma,k}=\la f_{t,\sigma}(x),x^{q^{kt}}\ra_{\Fqn}
\end{equation}
is an MRD-code with parameters $(n,t,q;t-1)$ for any $\sigma \in \Fqn^*$.
		
		\item
		Assume $t=t'=2$. A $q$-polynomial $f(x)$ over $\F_{q^4}$ is $\Rt$partially scattered if and only if \begin{equation}\label{e:c2x2}
		\cC=\la f(x),x,x^{q^{2}}\ra_{\F_{q^4}}
\end{equation}
is an MRD-code with parameters $(4,4,q;2)$.
	\end{enumerate}
\end{theorem}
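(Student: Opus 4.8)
The plan is to handle (a) and (b) in turn, deriving the MRD property from the bijectivity criterion for partially scattered polynomials (Proposition~\ref{p:frho}~(a)) applied to the rank of suitable $\Fq$-linear combinations.

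\medskip

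\textbf{Part (a).} Fix $\sigma\in\Fqn^*$ and set $\cC=\cC_{f,\sigma,k}$. The code $\cC$ consists of the $\Fq$-linear maps $\F_{q^t}\to\Fqn$ of the form $x\mapsto \lambda f(\sigma x)+\mu(\sigma x)^{q^{kt}}$ with $\lambda,\mu\in\Fqn$ (after absorbing the constant $\sigma^{q^{kt}}$ into $\mu$, which does not affect rank since $x\mapsto(\sigma x)^{q^{kt}}$ and $x\mapsto\sigma x$ differ by the automorphism $x^{q^{kt}}$ of $\F_{q^t}$, hence have equal rank on any subspace). First I would observe that $\dim_\Fq\cC=2n$: a map $\lambda f(\sigma x)+\mu(\sigma x)^{q^{kt}}$ is identically zero on $\F_{q^t}$ only if both coefficients vanish, because $x\mapsto x^{q^{kt}}$ restricted to $\F_{q^t}$ is the identity-up-to-relabelling and $f(\sigma x)$ cannot be a scalar multiple of it for all $x\in\F_{q^t}$ unless the relevant polynomial identity forces $\lambda=\mu=0$ — this needs a short argument using that $\gcd(k t,t)=t$ makes $x^{q^{kt}}\equiv x$ on $\F_{q^t}$, so one is really asking whether $f(\sigma x)$ is $\Fqn$-proportional to $x$ on all of $\F_{q^t}$, and if it were, $f$ would fail to be $\Rt$partially scattered in a trivial way; I will need to be slightly careful about the degenerate direction. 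The Singleton-like bound for $(n,t,q)$-codes with $d=t-1$ gives maximum dimension $n\cdot(t-(t-1)+1)=2n$, so $\cC$ meets the bound in dimension, and it remains to show the minimum distance is exactly $t-1$, equivalently that every nonzero codeword has rank $\ge t-1$. A nonzero codeword has the form $x\mapsto\lambda f(\sigma x)+\mu(\sigma x)^{q^{kt}}$; if $\mu=0$ it is $\lambda f(\sigma x)$, which is injective on $\F_{q^t}$ precisely when $f$ has no kernel vector in $\sigma\F_{q^t}$, and by Proposition~\ref{p:lat-rps}~(a) the kernel of an $\Rt$partially scattered $f$ meets each $\Fqt$-line in dimension $\le 1$, giving rank $\ge t-1$; if $\mu\ne0$, dividing through reduces to a codeword $x\mapsto f(\sigma x)-m(\sigma x)$ on $\F_{q^t}$, whose kernel is $\{x\in\F_{q^t}:f(\sigma x)=m\sigma x\}$, i.e.\ $\{y/\sigma : y\in\sigma\F_{q^t},\ f(y)/y=m\}$; the hypothesis that $f$ is $\Rt$partially scattered says exactly (via \eqref{e:erre}) that the set of such $y$ spans at most an $\Fq$-line, so the kernel has dimension $\le1$ and the rank is $\ge t-1$. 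Conversely, if $\cC_{f,\sigma,k}$ is MRD with those parameters for every $\sigma$, then for every $\sigma$ and every $m\in\Fqn$ the map $x\mapsto f(\sigma x)-m\sigma x$ has kernel of dimension $\le1$ on $\F_{q^t}$; unwinding this for the specific pairs $(y,z)$ with $y/z\in\Fqt$ and $f(y)/y=f(z)/z=:m$, choosing $\sigma$ so that $z\in\sigma\F_{q^t}$, forces $y,z$ to be $\Fq$-proportional, which is \eqref{e:erre}.

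\medskip

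\textbf{Part (b).} Here $n=4$, $t=t'=2$, and $\cC=\la f(x),x,x^{q^2}\ra_{\F_{q^4}}\subset\F_q^{4\times4}$. The Singleton-like bound for $(4,4,q)$-codes with $d=2$ gives $k\le 4\cdot(4-2+1)=12$; here $\dim_\Fq\cC=3\cdot4=12$ provided $f(x),x,x^{q^2}$ are $\F_{q^4}$-linearly independent in $\tilde{\mathcal L}_{4,q}$, which holds unless $f$ is itself an $\F_{q^2}$-linearized binomial $a x+b x^{q^2}$ — but such an $f$ is never $\Rt$partially scattered (it is $\F_{q^2}$-semilinear in a way that makes \eqref{e:erre} fail whenever $b\ne0$, and is not even injective considerations aside, reduces to the scalar case when $b=0$), so under the hypothesis the dimension is $12$. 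Thus again it suffices to show the minimum distance is $2$, i.e.\ no nonzero codeword has rank $\le1$. A nonzero codeword is $\lambda f(x)+a x+b x^{q^2}$ with $(\lambda,a,b)\in\F_{q^4}^3\setminus\{0\}$. If $\lambda=0$ the codeword is $ax+bx^{q^2}$, a nonzero $q^2$-linearized polynomial over $\F_{q^4}$, whose kernel is an $\F_{q^2}$-subspace of $\F_{q^4}$ of dimension $0$ or $2$ over $\Fq$; rank $1$ would mean an $\Fq$-kernel of dimension $3$, impossible. If $\lambda\ne0$, normalize to $f(x)+ax+bx^{q^2}=f(x)+g_{\mathbf b'}(x)$ with $g_{\mathbf b'}$ the $q^2$-polynomial $ax+bx^{q^2}$; by Proposition~\ref{R-ps+tpolynomial}, $f+g_{\mathbf b'}$ is again $\Rt$partially scattered, so by Proposition~\ref{p:lat-rps}~(b) its kernel has $\Fq$-dimension $\le 4/2=2$, hence rank $\ge 2$. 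This proves minimum distance $\ge2$ (and $=2$, since dimension $12$ with $d\ge3$ would violate the bound), so $\cC$ is MRD. For the converse, if $\cC$ is MRD with parameters $(4,4,q;2)$ then in particular every codeword $f(x)+ax+bx^{q^2}$ has rank $\ge2$, so $\ker(f+g_{\mathbf b'})$ has dimension $\le2$ for all choices of $(a,b)$; specializing and using \eqref{e:erre} as in part (a) — pick $y,z$ with $y/z\in\F_{q^2}\setminus\Fq$ and $f(y)/y=f(z)/z$, then choose $a,b$ so that $g_{\mathbf b'}(x)=-\,(f(y)/y)\,x$ on the line $\langle y,z\rangle_{\Fq}$ (possible since that line lies in one $\F_{q^2}$-line and a $q^2$-polynomial can realize any prescribed $\F_{q^2}$-scalar there) — forces the kernel, which would then contain the $2$-dimensional space $\langle y,z\rangle_{\Fq}$ plus the extra kernel vector coming from $\ker(ax+bx^{q^2})$ being nontrivial, to have dimension $>2$ unless $y,z$ are $\Fq$-dependent. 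One must choose $b$ to additionally make $g_{\mathbf b'}$ non-injective on the complementary $\F_{q^2}$-line, which is why this converse direction is special to $t=t'=2$ and needs the case $\lambda\ne0$ to be combined with the $\lambda=0$ degeneration.

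\medskip

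\textbf{Main obstacle.} The delicate point is the dimension count and the handling of the degenerate codewords: one must rule out that $f$ itself lies in $\la x,x^{q^{kt}}\ra$ (in (a)) or in $\la x,x^{q^2}\ra$ (in (b)), and then, in the converse directions, engineer the scalars $a,b$ (equivalently the $q^t$-polynomial $g_\aaa$) so that the prescribed $\Fq$-line $\langle y,z\rangle$ actually appears inside the kernel of $f+g_\aaa$ — this works cleanly only because within a single $\Fqt$-line a $q^t$-polynomial acts as an arbitrary $\Fqt$-scalar, and in part (b) additionally because $t'=2$ lets us choose the behaviour on the one remaining $\Fqt$-line independently. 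Everything else is a bookkeeping translation between ranks of $\Fq$-linear maps $\F_{q^t}\to\Fqn$ and the weight-one condition \eqref{e:erre}, together with the bound $\dim_\Fq\ker\le n/2$ from Proposition~\ref{p:lat-rps}.
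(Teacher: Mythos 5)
Your part (a) and the forward implication of part (b) are correct and essentially coincide with the paper's proof: rank at least $t-1$ (resp.\ $2$) for the nonzero codewords via \eqref{e:erre} together with Propositions \ref{R-ps+tpolynomial} and \ref{p:lat-rps}, dimension $2n$ (resp.\ $12$), and the Singleton count; the small omissions there (the codewords with $\lambda=0$, $\mu\neq0$ in (a), and the nonvanishing of the codeword $f_{t,u}(x)+mux^{q^{kt}}$ in the converse of (a)) are harmless, since $x^{q^{kt}}$ is bijective on $\Fqt$ and the $\Fqn$-independence of the two generators, forced by $\dim_\Fq\cC_{f,\sigma,k}=2n$, settles them.

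The genuine gap is in your converse of part (b), which is also where you leave the paper's route (the paper passes to the Delsarte dual, observes that $\cC^\perp=\la a_3x^q-a_1x^{q^3}\ra_{\F_{q^4}}$ must be a semifield spread set, computes a Dickson determinant to get $a_3^{q^2+1}\neq a_1^{q^2+1}$, and invokes Proposition \ref{p:costruzione}~(a)). Your mechanism for producing a third kernel vector does not work: if $w\neq0$ lies in $\ker(ax+bx^{q^2})$, then $(f+g_{\mathbf b'})(w)=f(w)$, which has no reason to vanish, so ``choosing $b$ to make $g_{\mathbf b'}$ non-injective on the complementary $\F_{q^2}$-line'' does not enlarge $\ker(f+g_{\mathbf b'})$ beyond $\la y,z\ra_{\Fq}$, and the intended contradiction with minimum distance $2$ evaporates. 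The repair stays inside your strategy: $g_{\mathbf b'}(x)=ax+bx^{q^2}$ ranges over all $\F_{q^2}$-linear maps of $\F_{q^4}$, hence can be prescribed arbitrarily on the $\F_{q^2}$-basis $\{y,w\}$ with $w\notin\la y\ra_{\F_{q^2}}$; set $g_{\mathbf b'}(y)=-my$ (so that $f+g_{\mathbf b'}$ vanishes on $\la y,z\ra_{\Fq}$) and $g_{\mathbf b'}(w)=-f(w)$. Then $\ker(f+g_{\mathbf b'})\supseteq\la y,z,w\ra_{\Fq}$ has dimension $3$, while $f+g_{\mathbf b'}\neq0$ because the MRD hypothesis forces $\dim_\Fq\cC=12$, i.e.\ $f\notin\la x,x^{q^2}\ra_{\F_{q^4}}$; this contradicts $d=2$. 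With this fix your converse becomes a correct and more elementary alternative to the paper's duality-plus-determinant argument, and it makes transparent why $t=t'=2$ is special (a single complementary $\F_{q^2}$-line to control); the paper's version, on the other hand, additionally exhibits $f$ as $g_{(a_1,a_3)}(x^q)$ with $g_{(a_1,a_3)}$ bijective, information your argument does not produce.
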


\begin{proof}
(a)
	Suppose that $f(x)$ is $\Rt$partially scattered and consider $\sigma \in \Fqn^*$. 
	By Proposition \ref{R-ps+tpolynomial}, the polynomials $f(\sigma x)+mx^{q^{kt}}$, 
	$m\in \F_{q^n}$, are all $\Rt$partially scattered, hence their restrictions to $\Fqt$ have 
	rank at least $t-1$.
		The code $\cC_{f,\sigma,k}$ has $\Fq$-dimension $2n$ and
		\[
		\max\{t,n\}(\min\{t,n\}-(t-1)+1)=2n,
		\]
		that is, it is MRD.
		
		Now suppose that for any $\sigma \in \Fqn^*$, $\cC_{f,\sigma,k}=\la f_{t,\sigma}(x),x^{q^{kt}}\ra_{\Fqn}$ is an MRD-code with parameters as in the statement, and let 
		\begin{equation}
		\frac{f(u)}{u}=\frac{f(v)}{v}=-m
		\end{equation}
		with $v=\lambda u$, for some $\lambda \in \Fqt$, and $m,u,v\in \Fqn$, $u\neq0\neq v$.
		Since
		$$f(u)+mu=f(\lambda u)+mu\lambda=0,$$
		we get that $1,\lambda \in \ker (f_{t,u}(x)+mux^{q^{kt}})$. 
		Therefore, since $f_{t,u}(x)+mux^{q^{kt}}\in \cC_{f,u,k}$, 
		its kernel is an $\Fq$-subspace of rank at most one, and $\lambda$ belongs to $\Fq$.
	
(b)
	The dimension over $\Fq$ of $\cC$ is twelve. 
		Taking into account Proposition \ref{p:lat-rps}~$(b)$, the equation in the 
		Singleton-like bound holds.
		
		Vice versa, let $\cC=\la f(x),x,x^{q^{2}}\ra_{\F_{q^4}}$ be an MRD-code with parameters $(4,4,q;2)$ and, without loss of generality, suppose that $f(x)=a_1x^q+a_3x^{q^3}$. 
		Then, since the Delsarte dual code $\cC^\perp$  of $\cC$ is an MRD-code with parameters $(4,4,q;4)$ (also known as \textit{semifield spread set}), the map 
		$$f^\perp: x \in \F_{q^4} \mapsto a_3x^q-a_1x^{q^3} \in \F_{q^4}$$ 
		is an $\F_q$-automorphism of $\F_{q^4}$. It is a straightforward calculation to show that the determinant of its Dickson matrix  $D_{f^{\perp}}$ is 
		\begin{equation}\label{det}
		-(a^{q^2+1}_3-a_1^{q^2+1})^{q+1},
		\end{equation}
		hence $a_3^{q^2+1}\neq a_1^{q^2+1}$.
		This implies that
		$g(x)=a_1x+a_3x^{q^2}$ is an automorphism of $\F_{q^4}$. 
		By Proposition \ref{p:costruzione}, $f(x)=g_{(a_1,a_3)}(x^q)$ is R-$q^2$-partially scattered.
\end{proof}

Note that by Proposition \ref{p:facile}, if $t$ and $t'$ are relatively prime, 
any $\Lt$scattered $q$-polynomial $f(x)$ gives rise to an MRD-code 
$\cC'_{f,\sigma,k}=\la f_{t',\sigma}(x),x^{q^{kt'}}\ra_{\Fqn}$ for any $\sigma\in\Fqn^*$ and any
$k\in\mathbb N$.

\end{sloppypar}

\begin{proposition}\label{p:ideal22}
Assume $t=t'=2$ and let $f(x)$  be  $\Rt$partially scattered. 
Then 
$$L(\cC)=R(\cC)\cong \F_{q^4},$$
where $\cC$ is as in \eqref{e:c2x2}.
Furthermore, $\cC$ is equivalent to a generalized Gabidulin code.
\end{proposition}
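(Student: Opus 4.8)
The plan is to identify the left and right idealisers of $\cC=\la f(x),x,x^{q^2}\ra_{\F_{q^4}}$ explicitly, using the description of $\cC$ as a set of linearized polynomials together with the structure already extracted in the proof of Theorem \ref{R-ps_MRDsquare}(b). First I would recall that, up to equivalence, $f(x)=a_1x^q+a_3x^{q^3}$ with $a_3^{q^2+1}\neq a_1^{q^2+1}$, so that $\cC$ consists of all polynomials of the shape $\alpha(a_1x^q+a_3x^{q^3})+\beta x+\delta x^{q^2}$ with $\alpha,\beta,\delta\in\F_{q^4}$. Since $t=t'=2$, Theorem \ref{R-ps_MRDsquare}(b) gives that $\cC$ is MRD with minimum distance $d=2>1$ and $\ell=n=4$; hence by Proposition \ref{MRD-idealisers} both $L(\cC)$ and $R(\cC)$ are finite fields of order at most $q^4$. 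It therefore suffices to exhibit, inside each idealiser, a subfield isomorphic to $\F_{q^4}$ — namely the scalar maps — and then argue the reverse inclusion.

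The key computation is that the field of scalar multiplications $\{x\mapsto cx : c\in\F_{q^4}\}$ is contained in both $L(\cC)$ and $R(\cC)$. For the right idealiser this is clear: composing any $g\in\cC$ on the right with $x\mapsto cx$ sends $\alpha(a_1x^q+a_3x^{q^3})+\beta x+\delta x^{q^2}$ to $\alpha(a_1c^qx^q+a_3c^{q^3}x^{q^3})+\beta cx+\delta c^{q^2}x^{q^2}$, which need not lie in $\cC$ — so in fact the right-composition by scalars does \emph{not} obviously work, and I would instead test which elements of $\tilde{\mathcal L}_{4,q}$ stabilise $\cC$ under right composition. The honest approach is: since $\cC=\la f(x),x,x^{q^2}\ra_{\F_{q^4}}$ and $f(x)=g_{(a_1,a_3)}(x^q)$ with $g_{(a_1,a_3)}(x)=a_1x+a_3x^{q^2}$ a $q^2$-polynomial, write $h(x):=g_{(a_1,a_3)}(x)$; then $\cC=\{h_1(x^q)+h_2(x): h_1\in\la h,x\ra_{\F_{q^4}},\ h_2\in\la x, x^{q^2}\ra_{\F_{q^4}}\}$ — i.e. $\cC$ is built from the $\F_{q^2}$-linear maps on $\F_{q^4}$ (polynomials in $x$ and $x^{q^2}$ over $\F_{q^4}$). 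The maps $x\mapsto c x$ and $x\mapsto cx^{q^2}$ for $c\in\F_{q^4}$ span precisely the full matrix algebra of $\F_{q^2}$-linear endomorphisms of $\F_{q^4}$, so right composition by any $q^2$-polynomial $c x + c' x^{q^2}$ preserves the ``$h_2$'' summand and, because $h$ is itself a $q^2$-polynomial, also preserves the ``$h_1(x^q)$'' summand. Hence $R(\cC)\supseteq\{x\mapsto cx: c\in\F_{q^4}\}\cong\F_{q^4}$, and combined with $|R(\cC)|\le q^4$ we get $R(\cC)\cong\F_{q^4}$. A symmetric argument on the left — using that $\cC^\top$ is equivalent to a code of the same shape and Proposition \ref{MRD-idealisers}(a), or directly that left composition by a $q^2$-polynomial commutes past the Frobenius twist $x\mapsto x^q$ appropriately — gives $L(\cC)\cong\F_{q^4}$, and indeed $L(\cC)=R(\cC)$ as both equal the scalar field.

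For the final sentence, the plan is to invoke the classification of MRD-codes in $\F_q^{4\times4}$ with $d=2$ (equivalently, by Delsarte duality, semifield-like objects of the dual dimension) whose left and right idealisers are both isomorphic to $\F_{q^4}$: these are exactly the generalized Gabidulin codes $\cG_{4,2,s}$ up to equivalence. Concretely, I would show that $\cC^\perp$ is a generalized twisted Gabidulin / Gabidulin code by matching the idealiser invariants — the Delsarte dual of a generalized Gabidulin code is again a generalized Gabidulin code (Proposition \ref{MRD-idealisers}(b) preserves the idealiser fields), and a $(4,4,q;2)$ MRD-code with maximal left and right idealisers is forced into the Gabidulin family by the known structure results. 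The main obstacle is the last step: pinning down that ``both idealisers equal $\F_{q^4}$'' genuinely characterises the generalized Gabidulin codes among $(4,4,q;2)$ MRD-codes, which requires citing the appropriate classification rather than proving it here; the idealiser computation itself, while needing care with the Frobenius twist $x\mapsto x^q$ sitting between the two $q^2$-polynomial blocks, is routine once $\cC$ is rewritten in the ``$h_1(x^q)+h_2(x)$'' form.
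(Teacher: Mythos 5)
Your overall frame (use Theorem \ref{R-ps_MRDsquare}(b) to get that $\cC$ is MRD with $d=2>1$ and $\ell=n=4$, invoke Proposition \ref{MRD-idealisers} to know both idealisers are finite fields of order at most $q^4$, then exhibit a field of order $q^4$ inside each) is reasonable, and for the left idealiser it does work: with the convention used both by you and by the paper ($L(\cC)$ tested by outer composition $\varphi\circ g$, $R(\cC)$ by inner composition $g\circ\varphi$), the scalar maps $x\mapsto cx$ lie in $L(\cC)$ simply because $\cC$ is an $\F_{q^4}$-subspace, and the size bound then gives $L(\cC)\cong\F_{q^4}$. The genuine gap is your treatment of $R(\cC)$. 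Writing $f(x)=a_1x^q+a_3x^{q^3}$, the elements of $\cC$ supported on $x^q,x^{q^3}$ are exactly the multiples $\alpha a_1x^q+\alpha a_3x^{q^3}$, $\alpha\in\F_{q^4}$; since $f(cx)=a_1c^qx^q+a_3c^{q^3}x^{q^3}$, one has $f(cx)\in\cC$ if and only if $c^q=c^{q^3}$ whenever $a_1a_3\neq0$ (e.g.\ $f=\delta x^q+x^{q^3}$ as in Proposition \ref{1example} with $t=2$, $s=1$). So the scalar field is \emph{not} contained in $R(\cC)$: only the $\F_{q^2}$-scalars are. Your attempted repair fails at exactly this point: precomposing $h(x^q)$ with a $q^2$-polynomial $\varphi(x)=cx+c'x^{q^2}$ gives $(h\circ\psi)(x^q)$ with $\psi(y)=c^qy+c'^qy^{q^2}$, and while $h\circ\psi$ is again $\F_{q^2}$-linear, it lies in $\la h\ra_{\F_{q^4}}$ only when $a_3\bigl(a_1c^q+a_3c'^{q^3}\bigr)=a_1\bigl(a_1c'^q+a_3c^{q^3}\bigr)$; the span $\la h\ra_{\F_{q^4}}$ is closed under left multiplication by $\F_{q^4}$, not under right composition with $\End_{\F_{q^2}}(\F_{q^4})$. (Also, the rewriting $\cC=\{h_1(x^q)+h_2(x)\colon h_1\in\la h,x\ra_{\F_{q^4}},\ h_2\in\la x,x^{q^2}\ra_{\F_{q^4}}\}$ is not correct: taking $h_1=\gamma x$ produces $\gamma x^q$, so that set is all of $\End_{\F_q}(\F_{q^4})$, of $\F_q$-dimension $16$, whereas $\dim_{\F_q}\cC=12$.) Consequently both the inclusion $R(\cC)\supseteq\{cx\colon c\in\F_{q^4}\}$ and your concluding sentence that $L(\cC)=R(\cC)$ ``as both equal the scalar field'' are unjustified, and the first is false in general.

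To close the gap you must actually determine $R(\cC)$: for instance, impose $g\circ\varphi\in\cC$ for the generators $f,x,x^{q^2}$, note that the conditions coming from $x$ and $x^{q^2}$ force $\varphi_1=\varphi_3=0$ (using $a_3^{q^2+1}\neq a_1^{q^2+1}$), so that the remaining condition from $f$ is a single $\F_q$-linear condition on $(\varphi_0,\varphi_2)\in\F_{q^4}^2$ leaving a solution space of $\F_q$-dimension at least $4$, and combine this with the bound $|R(\cC)|\le q^4$ of Proposition \ref{MRD-idealisers}. This is in substance what the paper does, except that it works on the other side of the dualities: it computes $L(\cC^\perp)$ and $L\bigl((\cC^\perp)^\top\bigr)$ by explicit linear systems (where the nonvanishing of \eqref{det} is the key hypothesis) and transfers the result back through Proposition \ref{MRD-idealisers}(a),(b). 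Your final step coincides with the paper's and is fine: once both idealisers are known to be isomorphic to $\F_{q^4}$, equivalence to a generalized Gabidulin code follows from the classification of square MRD-codes with maximum idealisers in \cite[Theorem~1.1]{CMPZideal}, valid for $n\le 6$ and $n=9$, so citing it is exactly what is needed.
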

\begin{proof} Let $f(x)$ be an $\Rt$partially scattered $q$-polynomial and, without loss of generality, suppose that 
$f(x)=a_1x^q+a_3x^{q^3}$. 
As in Theorem \ref{R-ps_MRDsquare}, consider the automorphism 
$f^{\perp}(x)=a_3x^q-a_1x^{q^3}$. An element $\varphi(x)=\sum^3_{i=0}\varphi_ix^{q^i}$ belongs to 
$L(\cC^\perp)$ if and only if composing $\varphi(x)$ to
the left with an $f^\perp(x)$'s $\Fqn$-multiple   is still an  
$f^\perp(x)$'s $\Fqn$-multiple. 
Then, for any $\beta \in \Fqn^*$ there exists $\alpha \in \Fqn$ such 
that $\varphi(x) \circ \beta f^\perp(x)= \alpha f^\perp(x)$.
	Making more explicit the latter condition, we get that the
coefficients of $\varphi(x)$ have to 
	solve the following  linear systems
	\begin{center}
		\begin{minipage}{0.41\textwidth}
			\begin{equation*}
				\begin{cases}
					a_3 \beta	\varphi_0-(a_1\beta)^{q^2} \varphi_2 =\alpha a _3 \\
					a_1\beta \varphi_0 - (a_3\beta)^{q^2}\varphi_2=\alpha a_1\\
				\end{cases}
			\end{equation*}
		\end{minipage}
		\hspace{15mm}
		\begin{minipage}{0.41\textwidth}
			\begin{equation*}
				\begin{cases}
					 (a_3\beta)^q	\varphi_1-(a_1\beta)^{q^3} \varphi_3 =0\\
					(a_1\beta)^q\varphi_1 - (a_3\beta)^{q^3}\varphi_3=0.\\
				\end{cases}
			\end{equation*}
		\end{minipage}\end{center}

Since the expression in \eqref{det} is not zero, it is a straightforward calculation to show that 
$\varphi_0=\alpha\beta^{-1}$ and $\varphi_i=0$, $i=1,2,3$, that is $\varphi(x)=\gamma x$, $\gamma \in \Fqn$. 
Since $L(\cC^\perp)=L(\cC)^\top$ and $\hat{\varphi}(x)=\varphi(x)$, $L(\cC)\cong \Fqn$.

	Consider now the code $\cD=\la -a_1^qx^q+a_3^{q^3}x^{q^3} \ra_{\F_{q^4}}$, the adjoint code of $\cC^\perp$. Applying the same argument above, we get that $L (\cD)=\{\gamma x \,\, |\,\, \gamma \in \Fqn\}$. Since the maps in $L(\cD)$ are self-adjoint and the $\top$ operator is involutory, 
	$$\Fqn \cong L(\cD)=R(\cC^\perp)^\top=R(\cC),$$ 
	then the result follows.
	
In \cite[Theorem 1.1]{CMPZideal} it is shown that for $2\le n\le 6$ or $n=9$, any 
$\Fq$-linear MRD-code in $\F_q^{n\times n}$ with left and right idealisers isomorphic to $\Fqn$
is equivalent to a generalized Gabidulin code.
This implies the second assertion of the proposition.
\end{proof}

\begin{proposition}\label{p:rightpsidealiser}
Let $g_\aaa(x)$ be a  bijective $q^t$-polynomial as in \eqref{e:gax}, $t>2$. 
Consider $0<s<n$ an integer relatively prime with $t$, $\sigma\in\F_{q^n}^*$, and $k \in \mathbb{N}$. 
Then 
$$R(\cC_{h,\sigma,k}) \cong \Fqt,$$ 
where $h(x)=g_\aaa(x^{q^s})$.
\end{proposition}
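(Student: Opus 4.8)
The plan is to read off the MRD structure of $\cC=\cC_{h,\sigma,k}$ from Theorem~\ref{R-ps_MRDsquare}, use Proposition~\ref{MRD-idealisers} to bound $|R(\cC)|$ from above by $q^t$, and then exhibit an explicit copy of $\Fqt$ inside $R(\cC)$, namely the right multiplications by scalars of $\Fqt$. First I would check that $h(x)=g_\aaa(x^{q^s})$ is $\Rt$partially scattered. The monomial $x^{q^s}$ is itself $\Rt$partially scattered: if $y^{q^s-1}=z^{q^s-1}$ and $y/z\in\Fqt$, then $y/z\in\F_{q^{\gcd(s,n)}}\cap\Fqt=\F_{q^{\gcd(s,t)}}=\Fq$, using $t\mid n$ and $\gcd(s,t)=1$. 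Hence Proposition~\ref{p:costruzione}~(a), applied with $\varphi(x)=x^{q^s}$ and the bijective $g_\aaa$, gives that $h(x)$ is $\Rt$partially scattered. By Theorem~\ref{R-ps_MRDsquare}~(a), $\cC$ is then an MRD-code with parameters $(n,t,q;t-1)$; since $t>2$ its minimum distance $d=t-1$ is $>1$, and regarding $\cC$ as an $\Fq$-subspace of $\F_q^{n\times t}$ with $n\ge t$, Proposition~\ref{MRD-idealisers} shows that $R(\cC)$ is a finite field with $|R(\cC)|\le q^t$.

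It then remains to produce $q^t$ elements of $R(\cC)$. For $\lambda\in\Fqt$ let $m_\lambda\in\End_\Fq(\Fqt)$ be the $\Fq$-linear map $x\mapsto\lambda x$; the set $\{m_\lambda:\lambda\in\Fqt\}$ is a subfield of $\F_q^{t\times t}$ isomorphic to $\Fqt$, so it suffices to check that each $m_\lambda$ lies in $R(\cC)$, i.e.\ that precomposition with $m_\lambda$ maps $\cC$ into itself. Since $g_\aaa$ is a $q^t$-polynomial it is $\Fqt$-linear, and $\lambda^{q^s}\in\Fqt$, so
\[
 h_{t,\sigma}(\lambda x)=g_\aaa\big((\sigma\lambda x)^{q^s}\big)=\lambda^{q^s}g_\aaa\big((\sigma x)^{q^s}\big)=\lambda^{q^s}\,h_{t,\sigma}(x),
\]
while $(\lambda x)^{q^{kt}}=\lambda^{q^{kt}}x^{q^{kt}}=\lambda\,x^{q^{kt}}$ because $\lambda\in\Fqt$. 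Consequently, for all $a,b\in\Fqn$,
\[
 \big(a\,h_{t,\sigma}(x)+b\,x^{q^{kt}}\big)\circ m_\lambda=(a\lambda^{q^s})\,h_{t,\sigma}(x)+(b\lambda)\,x^{q^{kt}}\in\cC,
\]
so $m_\lambda\in R(\cC)$. Therefore $|R(\cC)|\ge q^t$, and together with the reverse inequality this forces $R(\cC)=\{m_\lambda:\lambda\in\Fqt\}\cong\Fqt$.

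The computations are short; the one conceptual point is recognizing that the right idealiser acts by \emph{precomposition}, so that what has to be verified is exactly the stability of $\cC$ under right multiplication by $\Fqt$, and here the $\Fqt$-linearity of $g_\aaa$ does all the work on the generator $h_{t,\sigma}$. The hypothesis $t>2$ enters only to guarantee $d=t-1>1$, which is what makes Proposition~\ref{MRD-idealisers} applicable and yields both the field structure of $R(\cC)$ and the bound $q^t$; for $t=2$ this route fails, which is why that case is handled separately in Proposition~\ref{p:ideal22}.
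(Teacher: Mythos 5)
Your proof is correct, and its second half takes a genuinely different route from the paper's. The first half coincides with the paper: you get that $h(x)=g_\aaa(x^{q^s})$ is $\Rt$partially scattered via Proposition~\ref{p:costruzione}~(a) (your check that $x^{q^s}$ is $\Rt$partially scattered from $\gcd(s,t)=1$ is fine), hence $\cC_{h,\sigma,k}$ is an MRD-code with parameters $(n,t,q;t-1)$ by Theorem~\ref{R-ps_MRDsquare}~(a), and Proposition~\ref{MRD-idealisers} makes $R(\cC_{h,\sigma,k})$ a field of size at most $q^t$. From there the paper proceeds by explicit computation: reducing modulo $x^{q^t}-x$ it observes that, as maps on $\Fqt$, $\cC_{h,\sigma,k}=\la x^{q^s},x^{q^{kt}}\ra_{\Fqn}$, and then solves the membership conditions coefficientwise to show that any $\varphi\in R(\cC_{h,\sigma,k})$ must equal $\gamma x$ with $\gamma\in\Fqt$ --- there the hypotheses $t>2$ and $\gcd(s,t)=1$ are used to force $\varphi_s=0$, since $x^{q^{2s}}$ is not congruent to $x$ or $x^{q^s}$ modulo $x^{q^t}-x$ --- before checking sufficiency of the scalar maps. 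You skip the necessity computation altogether: you prove only the inclusion $\{x\mapsto\lambda x:\lambda\in\Fqt\}\subseteq R(\cC_{h,\sigma,k})$, using the $\Fqt$-linearity of $g_\aaa$ and $\lambda^{q^s},\lambda^{q^{kt}}\in\Fqt$, and let the bound $|R(\cC_{h,\sigma,k})|\le q^t$ force equality. The trade-off: your argument relies essentially on the idealiser bound of Proposition~\ref{MRD-idealisers} (which is exactly where $t>2$ enters for you, through $d=t-1>1$), whereas the paper's coefficient computation is self-contained at that stage and exhibits the concrete obstruction at $t=2$ (namely $x^{q^{2s}}\equiv x$), consistent with the larger idealiser found in Proposition~\ref{p:ideal22}. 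Both arguments are valid and of comparable length; yours is arguably cleaner, the paper's more informative about why the hypothesis $t>2$ is sharp.
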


\begin{proof} By Proposition \ref{p:costruzione}~(a) and \ref{R-ps_MRDsquare}~(a), 
$\cC_{h,\sigma,k}=\la h_{t,\sigma}(x),x^{q^{kt}} \ra_{\F_{q^n}}$ is an MRD-code with parameters $(n,t,q;t-1)$ 
and, by Proposition \ref{MRD-idealisers}, the right idealiser
 $$R(\cC_{h,\sigma,k})=\{\varphi \in \End_{\Fq}(\Fqt) : f(\varphi(x))\in \cC_{h,\sigma,k}\,\,\forall f \in 
 \cC_{h,\sigma,k}\}$$ 
is isomorphic to a field of size at most $q^t$.
Since
\[h(\sigma x)=\left(\sum_{i=0}^{t'-1}a_i \right)(\sigma x)^{q^s} \,\,\mod (x-x^{q^t}),
\] 
it follows that
$\cC_{h,\sigma,k}=\la x^{q^s},x^{q^{kt}} \ra_{\F_{q^n}} $. 
Hence if $\varphi(x)=\sum_{i=0}^{t-1}\varphi_ix^{q^{i}}$ belongs to $R(\cC_{h,\sigma,k})$, then there  exist 
$a, b \in \Fqn$ such that
$$\varphi(x)^{q^{kt}}=\varphi(x)=ax^{q^s}+bx$$
modulo $x^{q^t}-x$,
hence $\varphi_i=0$ for all $i \not = 0,s$. 
Similarly, $\varphi(x)$ is in $R(\cC_{h,\sigma,k})$ if there exists $c,d \in \Fqn$ such that
$$\varphi(x)^{q^s}=cx^{q^s}+dx,$$
hence one gets that $\varphi_s=0$ and $\varphi(x)=\gamma x$ with $\gamma \in \Fqt$. It a straightforward calculation to see that the $\Fq$-endomorphisms of $\Fqt$ $\varphi(x)=\gamma x $ with $\gamma \in \Fqt$ are in $R(\cC_{h,\sigma,k})$. 
Therefore, the result follows.
\end{proof}

Clearly the examples discussed in Propositions \ref{1example} and \ref{p:cmmz} satisfy the hyphoteses of proposition above.

\begin{remark}
Note that in general, if $f(x)$ is any $\Rt$partially scattered polynomial, the MRD-codes 
$\cC_{f,\sigma,k}$ in 
Theorem~\ref{R-ps_MRDsquare}~(a) could have a right idealiser not isomorphic to $\Fqt$. 
Indeed, consider the polynomial $f(x)=\delta x^q+x^{q^{2t-1}} \in \F_{q^{2t}}[x]$ with 
$\N_{q^n/q}(\delta) \not \in \{0,1\}$,  and $t \geq 5$. 
Since $f(x)$ is scattered, it is $\Rt$partially scattered. 
As in Proposition \ref{p:rightpsidealiser}, an $\Fq$-endomorphism 
$\varphi(x)=\sum_{i=0}^{t-1}\varphi_ix^{q^i}$ of $\Fqt$  is in $R(\cC_{f,1,k})$, 
if there exist $a,b \in \Fqn$ such that
$$\varphi(x)=a(\delta x^q+x^{q^{t-1}})+bx,$$
hence $\varphi_i=0$ for all $i \not \in \{0,1,t-1\}$. 
Similarly, $\varphi(x)$ belongs to $R(\cC_{h,1,k})$ if there exist $c,d \in \Fqn$ such that

$$\delta \varphi(x)^q+\varphi(x)^{q^{t-1}}=c(\delta x^q+x^{q^{t-1}})+dx.$$
Reducing modulo $x-x^{q^t}$ the expression above, $\varphi_1=\varphi_{t-1}=0$ and $\varphi_0 \in \F_{q^2}$. On the other hand, it is trivial that the maps $\varphi(x)= \gamma x $ with $\gamma  \in \F_{q^2}$, if $t$ is even, and $\gamma \in \Fq$, if $t$ is odd are in $R(\cC_{f,1,k})$. Then 
\begin{equation*}
R(\cC_{f,1,k})\cong
\begin{cases}
\F_{q^2} \quad \textnormal{if $t$ is even} \\
\Fq  \,\quad  \textnormal{if $t$ is odd}
\end{cases}.
\end{equation*}
\end{remark}

\begin{remark}
	In \cite{BaZh} some properties and generalizations of the scattered linearized polynomials are
	dealt with which may suggest further approaches to the partially scattered linearized
	polynomials.
	In particular, in \cite[Lemma 2.1]{BaZh} it is noted that
	a $q$-polynomial $f(x)$ is scattered if and only if the algebraic curve $\mathcal V$ in $\PG(2,q^n)$
	defined by
	\[
	\frac{f(X)Y-Xf(Y)}{XY^q-X^qY}
	\]
	(that is indeed a polynomial)
	contains no affine point $(x,y)$ such that $x/y\not\in\Fq$.
	Clearly $f(x)$ is L-$q^t$-partially scattered (resp.\ R-$q^t$-partially scattered) if and only if
	$\mathcal V$ contains no affine point $(x,y)$ such that $x/y\not\in\F_{q^t}$
	(resp.\ such that $x/y\in\F_{q^t}\setminus\Fq$).
\end{remark}



\textbf{Acknowledgement}

The authors are grateful to Olga Polverino for careful reading of the paper and valuable suggestions and 
comments.

\noindent
Giovanni Longobardi and Corrado Zanella\\
Dipartimento di Tecnica e Gestione dei Sistemi Industriali\\
Universit\`a degli Studi di Padova\\
Stradella S. Nicola, 3\\
36100 Vicenza VI - Italy\\
\emph{\{giovanni.longobardi,corrado.zanella\}@unipd.it}

\end{document}